\newcommand{\ssq}{\ensuremath{\subseteq}}
\newcommand{\smin}{\ensuremath{\setminus}}
\newcommand{\eps}{\ensuremath{\varepsilon}}
\newcommand{\T}{\ensuremath{\mathbb{T}}}
\newcommand{\N}{\ensuremath{\mathbb{N}}} 
\newcommand{\R}{\ensuremath{\mathbb{R}}}
\newcommand{\Z}{\ensuremath{\mathbb{Z}}}
\newcommand{\A}{\ensuremath{\mathbb{A}}}
\newcommand{\Id}{\ensuremath{\mathrm{Id}}}
\newcommand{\Leb}{\ensuremath{\mathrm{Leb}}}
\newcommand{\inte}{\ensuremath{\mathrm{int}}}
\newcommand{\kreis}{\ensuremath{\mathbb{T}^{1}}}
\newcommand{\sltr}{\ensuremath{\textrm{SL}(2,\mathbb{R})}}
\newcommand{\torus}{\ensuremath{\mathbb{T}^2}}
\newcommand{\alphlist}{\begin{list}{(\alph{enumi})}{\usecounter{enumi}\setlength{\parsep}{2pt}
      \setlength{\itemsep}{1pt} \setlength{\topsep}{5pt}
      \setlength{\partopsep}{3pt}}}
\newcommand{\arablist}{\begin{list}{(\arabic{enumi})}{\usecounter{enumi}\setlength{\parsep}{2pt}
          \setlength{\itemsep}{1pt} \setlength{\topsep}{5pt}
          \setlength{\partopsep}{3pt}}}
\newcommand{\romanlist}{\begin{list}{(\roman{enumi})}{\usecounter{enumi}\setlength{\parsep}{2pt}
              \setlength{\itemsep}{1pt} \setlength{\topsep}{5pt}
              \setlength{\partopsep}{3pt}}}
 \newcommand{\listend}{\end{list}}
\newcommand{\bulletlist}{\begin{list}{$\bullet$}{\setlength{\parsep}{2pt}
                \setlength{\itemsep}{1pt} \setlength{\topsep}{5pt}
                \setlength{\partopsep}{3pt}\setlength{\leftmargin}{15pt}}}
\newcommand{\kfolge}[1]{\ensuremath{(#1)_{k\in\mathbb{N}}}}
\newcommand{\ncap}{\ensuremath{\bigcap_{n\in\N}}}
\newcommand{\kLim}{\ensuremath{\lim_{k\rightarrow\infty}}}
\newcommand{\halb}{\ensuremath{\frac{1}{2}}}
\theoremstyle{plain}
\newtheorem{theorem}{Theorem}[section]
\newtheorem{proposition}[theorem]{Proposition}
\newtheorem{corollary}[theorem]{Corollary}
\newtheorem{lemma}[theorem]{Lemma}
\theoremstyle{definition}
\newtheorem{remark}[theorem]{Remark}
\numberwithin{equation}{section}
\newcommand{\NNN}{\mathbb N}
\newcommand{\ZZZ}{\mathbb Z}
\newcommand{\III}{\mathbb I}
\newcommand{\TTT}{\mathbb T}
\newcommand{\AAA}{\mathbb A}
\newcommand{\dist}{\mathop{\rm dist}}
\newcommand{\inter}{\mathop{\rm int}}
\newcommand{\Orb}{\mathop{\rm Orb}}
\newcommand{\sep}{\ \vert \ }
\newcommand{\ssep}{\, \vert \, }
\newcommand{\comment}[1]{}
\begin{document}

\title{A construction of almost automorphic minimal sets}

\date{\today}

\author[R. Hric]{Roman Hric}
\address{Faculty of Natural Sciences, Matej Bel University, Bansk\'a Bystrica, Slovakia}
\email{hric@savbb.sk, roman.hric@umb.sk}

\author[T. J\"ager]{Tobias J\"ager} \address{Department of
  Mathematics, TU Dresden, Germany}
\email{Tobias.Oertel-Jaeger@tu-dresden.de}

\subjclass[2010]{Primary 54H20, 37A05, 37E30.\\ Secondary 28D05,
  37B05.}

\keywords{Dynamical system, quasi-periodically forced, almost periodic,
almost automorphic, filled-in minimal set}

\begin{abstract} We describe a general procedure to construct
  topological extensions of given skew product maps with
  one-dimensional fibres by blowing up a countable number of single
  points to vertical segments. This allows to produce various examples
  of unusual dynamics, including almost automorphic minimal sets of
  almost periodically forced systems, point-distal but non-distal
  homeomorpisms of the torus (as first constructed by Rees) or minimal
  sets of quasiperiodically forced interval maps which are not
  filled-in.
\end{abstract}

\maketitle

\section{Introduction}

Minimal sets and minimal transformations can be considered as the
smallest building blocks of a dynamical system, and consequently their
study has a long tradition in topological dynamics
\cite{ellis1969lectures,auslander1988minimal,kolyada/snoha:2009}. An
important subclass of minimal transformations are the {\em almost
periodic} ones, which can be defined by being conjugate to a minimal
isometry and present the most regular type of minimal dynamics \cite{shen/yi:1998}.
An equivalent definition is to require that the family of iterates is
equicontinuous.
A natural generalisation is given by the notion of almost automorphy.
While its original definition has a rather technical flavour, Veech Structure
Theorem \cite{veech1965almost} allows to state it in a very conceptual way: A map $f$
is {\em almost automorphic} if and only if it is semiconjugate to an almost periodic
map $g$ by an almost 1-1 factor map.

Probably the simplest examples of almost automorphic, but not almost
periodic minimal sets occur for Denjoy examples on the circle
$\kreis=\R/\Z$.
We say a circle homeomorphisms $f$ is of {\em Denjoy type} if its rotation number is
irrational and it exhibits a {\em wandering open interval}.
In this case, there exists a unique minimal set $M\ssq \kreis$, which
is a Cantor set. Further, there is an order-preserving semiconjugacy
$h$ from $f$ to the irrational rotation which collapses the intervals
in $\kreis\smin M$, {\em gaps}, by sending them to single points,
while being injective elsewhere.  Hence, as the only points in $M$
with common image are the endpoints of gaps, the map $h$ is almost
1-1 and the irrational rotation is an almost periodic almost 1-1
factor.  Apart from these basic examples, almost automorphic dynamics
often occurs in dynamical systems of intermediate complexity. In
particular, a very rich class of examples is obtained from hull
constructions for quasicrystals or aperiodic tilings.

The focus of this article lies on almost automorphic minimal sets
which occur in continuous skew product transformations of the form
\begin{equation}\label{e.skewproducts}
  f \ : \  \Theta\times X \to \Theta\times X \quad , 
  \quad f(\theta,x)\ = \ (\alpha(\theta),f_\theta(x))
\end{equation}
with an almost periodic homeomorphism $\alpha$ on the base. We say $f$ is an
{\em $\alpha$-forced increasing interval map} if $X\ssq \R$ is an interval and all
fibre maps $f_\theta$ are strictly monotonically increasing.
When $\alpha$ is just an irrational rotation, we speak of {\em quasiperiodic forcing}. 

Given $A\ssq\Theta\times X$, we let $A_\theta=\{x\in X\mid (\theta,x)\in A\}$
and say that $A$ is {\em pinched} if there exists $\theta\in\Theta$ with $\#
A_\theta=1$. Then it is easy to see that any pinched minimal set is almost
automorphic, since in this case an almost 1-1 factor map to $(\Theta,\alpha)$ is
simply given by the projection $\pi_1:\Theta\times X \to \Theta$ to the first
coordinate.  For almost periodically forced increasing interval maps, the converse
is true as well. In fact, in this case every minimal set is pinched, and hence
also almost automorphic \cite{stark:2003,shen/yi:1998}. Note that for more
general almost periodically forced maps the situation is quite different, and
this can already be seen by looking at forced circle homeomorphisms. For
example, the direct product of an irrational rotation and a Denjoy homeomorphism
$f_0$ on the circle has a unique minimal set $M$ (assuming incommensurability of
the rotation numbers), which is the product of $\kreis$ with the minimal Cantor
set $\widehat M$ of $f_0$.  Hence, this set is not pinched -- $M_\theta=\widehat
M$ is uncountable for all $\theta$ -- but it is almost automorphic since it has
the irrational translation on $\torus$ as an almost 1-1 factor.

Almost automorphic minimal sets of pinched type were first observed by
Million{\u{s}\u{c}}ikov \cite{millionscikov:1969} and Vinograd
\cite{vinograd:1975} in certain linear cocycles over almost periodic
base flows. They occur frequently in natural parameter families of
real-analytic skew products over irrational rotations, as later shown
by Herman \cite{herman:1983} for \sltr-cocycles and by
Fuhrmann \cite{fuhrmann:2013} in a more general setting
(see also \cite{bjerkloev:2005,bjerkloev:2007}).  However, all these
constructions make it difficult to produce examples with prescribed
additional properties. Since there are quite a few open problems
concerning the structure of almost automorphic minimal sets, such a
freedom in the construction would be highly desirable.

Our aim here is to describe a general blow-up procedure which allows
to create almost automorphic minimal sets in skew product systems,
starting from almost periodic ones. We say $\Gamma\ssq\Theta\times X$
is an $f$-{\em invariant curve} if there exists a continuous function
$\gamma:\Theta\to X$ such that 
$\Gamma=\{(\theta,\gamma(\theta))\mid\theta\in\Theta\}$.
Note that in this case $\pi_1$ conjugates
$f_{|\Gamma}$ to $\alpha$. The main result is the following.

\begin{theorem}\label{t.main}
  Let $\alpha$ be an almost periodic minimal homeomorphism of an
  infinite compact metric space $\Theta$, and $f:\Theta \times \III
  \to \Theta \times \III$ an $\alpha$-forced increasing interval map.
  Assume that $\Gamma=\{(\theta,\gamma(\theta))\mid\theta\in\Theta\}$
  is an $f$-invariant curve with $\gamma:\Theta \to \inter \III$. 
  Then there exists
  a topological extension $\hat f:\Theta \times \III \to \Theta \times
  \III$ of $f$ with the factor map $h$ from $\hat f$ to $f$,
  $h(\theta,x)=(\theta,h_\theta(x))$, such that the following holds:
  \romanlist
  \item $\hat f$ is an $\alpha$-forced increasing interval map;
  \item all the fibre maps $h_\theta$ are non-decreasing;
  \item $h$ is injective on the complement of $h^{-1}(\Gamma)$;
  \item for a countable number of points in $\Gamma$ the preimage
    under $h$ is a vertical segment, for all other points in $\Gamma$
    it is a singleton;
  \item $h^{-1}(\Gamma)$ does not contain any graph of a continuous
    curve $\eta: \Theta \to \III$;
  \item $h^{-1}(\Gamma)$ is pinched and contains an almost
    automorphic minimal set which is not almost periodic.
  \listend
\end{theorem}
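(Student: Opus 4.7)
The strategy is a fiberwise Denjoy-style blow-up of a single orbit on $\Gamma$. Fix $\theta_0\in\Theta$ and set $\theta_n=\alpha^n(\theta_0)$, $x_n=\gamma(\theta_n)\in\interior\III$ for $n\in\ZZZ$. Since $\Theta$ is infinite and $\alpha$ is minimal and almost periodic (hence distal), the map $n\mapsto\theta_n$ is injective and the orbit is dense. Choose weights $\ell_n>0$ with $\sum_{n\in\ZZZ}\ell_n<\infty$ and $\ell_n\to 0$ fast enough as $|n|\to\infty$ (to be fixed during the construction). I plan to define $h$ so that $h_{\theta_n}:\III\to\III$ is a continuous non-decreasing surjection with a single plateau of length $\ell_n$ collapsed to the height $x_n$, strictly increasing outside that plateau, while for non-orbit $\theta$ the map $h_\theta$ is a homeomorphism. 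The extension $\hat f$ is then defined so that it permutes the vertical plateaus $\{\theta_n\}\times[y_n^-,y_n^+]:=h_{\theta_n}^{-1}(\{x_n\})$ via an order-preserving affine homeomorphism onto $\{\theta_{n+1}\}\times[y_{n+1}^-,y_{n+1}^+]$, and elsewhere is uniquely determined by the factor equation $h\circ\hat f=f\circ h$.

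To realise $h$ concretely, I would write it as a uniformly convergent superposition of localised ``pinch profiles'': for each $n$ choose a small neighbourhood $U_n$ of $\theta_n$ in $\Theta$ and a continuous function $\psi_n:U_n\times\III\to\RRR$ that at $\theta=\theta_n$ pinches an interval of length $\ell_n$ around $x_n$ to zero slope and tapers continuously to zero on $\partial U_n$. With the $U_n$ shrinking fast enough that their supports become pairwise essentially disjoint, summability of $\ell_n$ guarantees that the total perturbation converges uniformly to a continuous $h$ of the required form. Properties (i)--(iv) then follow by inspection: $\hat f$ is fiberwise strictly increasing by the taperwise monotonicity of the perturbation away from the plateau centres; $h$ is injective off $h^{-1}(\Gamma)$ because all the collapsing takes place exactly at the orbit points; and the preimages of the countable set $\{(\theta_n,x_n)\}$ are the vertical segments of length $\ell_n$, while every other point of $\Gamma$ has singleton preimage.

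For (v), the symmetric tapering of the pinch profiles should ensure that the limits of the singleton selector $\theta\mapsto h_\theta^{-1}(\gamma(\theta))$ as $\theta$ approaches $\theta_n$ through non-orbit directions accumulate at both endpoints $y_n^-$ and $y_n^+$. Any continuous $\eta:\Theta\to\III$ with $\graph\eta\subseteq h^{-1}(\Gamma)$ would then be forced at $\theta_n$ to agree with two distinct accumulation values, which is impossible. For (vi): for every $\theta\notin\{\theta_n\}$, $h_\theta^{-1}(\gamma(\theta))$ is a singleton, so $h^{-1}(\Gamma)$ is pinched. By Zorn's lemma the closed $\hat f$-invariant set $h^{-1}(\Gamma)$ contains a minimal set $M$; its projection $\pi_1(M)$ is closed and $\alpha$-invariant, hence equals $\Theta$ by minimality of $\alpha$, so $M$ inherits singleton fibers at every non-orbit $\theta$ and is therefore pinched, and hence almost automorphic by the argument from the introduction. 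If $M$ were almost periodic then $\pi_1|_M$ would be a conjugacy to $\alpha$ and $M$ would be the continuous graph of some $\eta:\Theta\to\III$, contradicting (v).

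The main obstacle is the construction in the second step: one has to choose the neighbourhoods $U_n$ and weights $\ell_n$ so that the superposed profiles simultaneously (a) sum to a continuous, fiberwise non-decreasing surjection $h$ whose plateaus occur only at the prescribed locations and widths, (b) leave $h$ injective off the countable set of blown-up orbit points, and (c) are compatible with a continuous, fiberwise strictly increasing extension $\hat f$ that permutes the plateaus correctly. Controlling the accumulation of infinitely many overlapping perturbations at fibers $\theta$ that lie close to many $\theta_n$ at once, while preserving strict monotonicity and joint continuity, is the delicate combinatorial heart of the argument.
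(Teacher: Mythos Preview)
Your outline has the right picture—a fibrewise Denjoy blow-up of one orbit on $\Gamma$—but two of the steps are genuine gaps rather than details to be filled in.

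The most serious one is the continuity of $\hat f$. Declaring $\hat f$ affine on each plateau and equal to $h^{-1}\circ f\circ h$ off the orbit fibres does not by itself give a continuous map; you must show that for non-orbit $\theta,\theta'$ close to $\theta_n$ and $x,x'$ close, the values $h_{\alpha(\theta)}^{-1}(f_\theta(h_\theta(x)))$ and $h_{\alpha(\theta')}^{-1}(f_{\theta'}(h_{\theta'}(x')))$ are close, even though $h_{\alpha(\cdot)}^{-1}$ is blowing up near the plateau over $\theta_{n+1}$. This is the technical heart of the paper's argument, and it works there only because $h_\theta$ is defined as the quantile function of a fibre measure $\mu_\theta=\sum_n a_n\,\mu^n_\theta+b\,\Leb_\III$ with $\mu^n=f^{n*}\mu^0$: applying $f$ then shifts the index, so that the expansion of $|x-x'|$ and the expansion of the vertical distance between the $\hat f$-images differ only by a reindexing $n\mapsto n-1$, and uniform continuity follows from summability of $(a_n)$ together with the bound $\max_{|n|\le N} a_n/a_{n-1}<\infty$. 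Your localised pinch profiles carry no such shift-compatibility with $f$, and you give no mechanism for controlling $h_{\alpha(\theta)}^{-1}$ near one plateau in terms of $h_\theta$ near the previous one. This, rather than the overlap issue you flag at the end, is the real obstruction; indeed, since the orbit is dense, the $U_n$ can never be made essentially disjoint, and the paper's device of transporting a \emph{single} seed measure $\mu^0$ along the orbit by push-forward is precisely what replaces your attempted geometric localisation.

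Your argument for (v) is also wrong as stated. With genuinely symmetric tapering, the singleton selector $\theta\mapsto h_\theta^{-1}(\gamma(\theta))$ would converge to the \emph{midpoint} $m_n$ of each plateau as $\theta\to\theta_n$, and since an affine $\hat f$ sends midpoints to midpoints while on non-orbit fibres the selector is $\hat f$-invariant by construction, the extended midpoint curve would be a continuous invariant graph inside $h^{-1}(\Gamma)$—exactly what (v) forbids. One needs deliberate \emph{asymmetry}: the paper builds $\mu^0_\theta$ as normalised Lebesgue on $[\psi(\theta),\varphi(\theta)]$ with $\psi\le\gamma\le\varphi$ chosen so that $\varphi=\gamma$ along one sequence $\sigma_k\to\theta^*$ and $\psi=\gamma$ along a disjoint sequence $\tau_k\to\theta^*$; this forces the selector to accumulate at both endpoints of the segment over $\theta^*$ and produces a jump of size $a_0$, ruling out any continuous section.
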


The proof is based on a similar but more technical construction that
was used in \cite{beguin/crovisier/jaeger/leroux:2009} to produce
different examples of transitive but non-minimal skew product
transformations. We hope that the method will turn out useful in order
to address further related problems as well. For this reason, we try
to present the main idea as clearly as possible and develop it in
several steps.  In Section~\ref{Denjoy} we first show how a simple
version of the construction can be used to produce Denjoy
homeomorphisms of the circle. In Section \ref{BlowUp} we then give the
proof of Theorem~\ref{t.main} for the case of increasing interval maps
forced by irrational rotations.  The modifications needed to treat
more general forcing in order to prove Theorem~\ref{t.main} are then
discussed in Section~\ref{General}.

One of the most obvious questions concerning the structure of pinched almost
automorphic minimal sets is the following. Given a minimal set $M$ of a
minimally forced increasing interval map $f$, let
\begin{equation}
  \label{eq:1}
  \varphi^-_M(\theta)\  =\ \inf M_\theta \quad\textrm{ and } 
\quad \varphi^+_M(\theta)\ =\ \sup M_\theta \ .
\end{equation}
Then it is a direct consequence of monotonicity that
\begin{equation}
  \label{eq:2}
  \left[\varphi^-_M,\varphi^+_M\right]\ = \ \left\{(\theta,x)\in\Theta\times X\mid x\in
    \left[\varphi^-_M(\theta),\varphi^+_M(\theta)\right]\right\}
\end{equation}
is $f$-invariant, and $\left[\varphi^-_M,\varphi^+_M\right]$ is
pinched since $M$ is. The question is whether
$M=\left[\varphi^-_M,\varphi^+_M\right]$? If the answer is yes, $M$ is
called {\em filled-in}.  This problem was first observed by Herman in
\cite{herman:1983} (see also \cite{stark:2003}). Examples where the
answer is positive were given by Bjerkl\"ov \cite{bjerkloev:2005}, but
counterexamples have not been described yet. However, in the proof of
Theorem~\ref{t.main} it is not difficult to ensure that the interiors
of the vertical segments appearing in (iv) do not belong to the
minimal set. Hence, we obtain the following.

\begin{proposition}
  There exist almost automorphic minimal sets of quasiperiodically
  forced increasing interval maps which are not filled-in.
\end{proposition}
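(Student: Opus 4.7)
The plan is to invoke Theorem~\ref{t.main} in the quasiperiodic setting and verify that, with an appropriate choice inside the construction, the minimal set $\hat M \ssq h^{-1}(\Gamma)$ supplied by part~(vi) automatically fails to be filled-in.

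Concretely, I would take $\Theta=\kreis$ with $\alpha$ an irrational rotation, $\III=[0,1]$, the trivial $\alpha$-forced increasing interval map $f(\theta,x)=(\alpha(\theta),x)$, and the constant invariant curve $\gamma\equiv\halb$. Theorem~\ref{t.main} then produces $\hat f$, a factor map $h$ and a pinched almost automorphic minimal set $\hat M\ssq h^{-1}(\Gamma)$. By~(iv) there is a countable set $\Sigma\ssq\Theta$ such that $h^{-1}\{(\theta,\gamma(\theta))\}=\{\theta\}\times[a_\theta,b_\theta]$ is a non-degenerate vertical segment for every $\theta\in\Sigma$, and a singleton otherwise. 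The entire content of the proposition reduces to the claim that
\begin{equation*}
\hat M \cap \bigl(\{\theta\}\times(a_\theta,b_\theta)\bigr)\ =\ \emptyset \qquad \text{for every }\theta\in\Sigma,
\end{equation*}
because this forces $\varphi^-_{\hat M}(\theta)=a_\theta$ and $\varphi^+_{\hat M}(\theta)=b_\theta$, while $[\varphi^-_{\hat M},\varphi^+_{\hat M}]_\theta=[a_\theta,b_\theta]$ strictly contains $\hat M_\theta=\{a_\theta,b_\theta\}$.

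To secure this, I would revisit the inductive blow-up of Section~\ref{BlowUp} and arrange two things. First, the countable set of points in $\Gamma$ where a vertical segment is inserted should be exactly the $\alpha$-orbit of a single point $\theta_0\in\Theta$; by the intertwining relation $h\circ\hat f=f\circ h$ and strict monotonicity of the fibre maps of $\hat f$, this forces $\{S_\theta\}_{\theta\in\Sigma}$ to form a single $\hat f$-orbit of segments, and together with summability of the inserted lengths it turns the open interiors $\{\theta\}\times(a_\theta,b_\theta)$ into a pairwise disjoint family of wandering open sets of $\hat f$ --- in direct analogy with the wandering intervals of the Denjoy construction of Section~\ref{Denjoy}. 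Second, the generic point used to generate $\hat M$ as an orbit closure should be chosen outside the union of these wandering segments, so that its orbit never enters any open segment interior. Under these two conditions $\hat M$ meets each $S_\theta$ only at the two endpoints, which is precisely the non-emptiness-of-the-gap required above.

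The main obstacle is entirely bookkeeping: one must go back into the inductive step of Section~\ref{BlowUp} and verify that the countable insertion set can genuinely be taken to be one $\alpha$-orbit, that the inserted lengths can be kept summable, and that the trajectory chosen to generate $\hat M$ can be forced to avoid all segment interiors in the limit. The authors already announce that this is routine; no new conceptual ingredient beyond Theorem~\ref{t.main} and the wandering-intervals analogue from Section~\ref{Denjoy} should be needed.
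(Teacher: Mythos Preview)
Your strategy is the paper's: the vertical segments over $\Sigma$ are wandering, so their open interiors cannot meet any minimal set, and this forces $\hat M$ to omit the gap interiors and hence fail to be filled-in. The paper packages this by observing (Remark~\ref{r.denjoy-dynamics}) that $h^{-1}(\Gamma)$ is homeomorphic to a circle on which $\hat f$ acts as a Denjoy homeomorphism, and then simply quoting the standard fact that the unique minimal set of a Denjoy map is a Cantor set omitting all gap interiors.

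Two points in your write-up need correction, though. First, there is nothing to ``arrange'' by revisiting the construction: in Section~\ref{BlowUp} the atoms of $\mu$ are already placed precisely on a single $\alpha$-orbit $\{R^n(\theta^*)\mid n\in\Z\}$ with summable weights $(a_n)$, so your first condition is built in from the start. Second --- and this is the real conceptual slip --- your ``second condition'' about choosing a generating point for $\hat M$ misreads what a minimal set is. The set $\hat M$ is not an orbit closure you construct by picking a favourable starting point inside the blow-up procedure; it is any minimal subset of the compact invariant set $h^{-1}(\Gamma)$, whose existence is abstract. The argument you actually need is simpler and requires no choice: a point in the open interior of a wandering set is non-recurrent, while every point of a minimal set is recurrent (indeed uniformly recurrent), so $\hat M$ automatically avoids all segment interiors. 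Finally, you assert $\hat M_\theta=\{a_\theta,b_\theta\}$ without saying why both endpoints lie in $\hat M$; the clean justification is that by part~(vi) the set $\hat M$ is not almost periodic, hence not a continuous invariant curve, hence some fibre $\hat M_\theta$ has at least two points --- which must then be the two endpoints $a_\theta,b_\theta$, and that single fibre already witnesses the failure of filled-in.
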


The proof is again given in Section~\ref{BlowUp}. We also note that
continuous-time examples with analogous properties can be obtained as
suspension flows of discrete-time maps. 

Finally, we give two further applications to demonstrate the
flexibility of the construction. In Section~\ref{Sharkovski} we
discuss some consequences of Theorem~\ref{t.main} in the light of the
Sharkovsky Theorem for minimally forced interval maps. In particular,
we provide an example which shows that periodic orbits of unforced
interval maps cannot simply be replaced by periodic continuous curves
in the forced setting, and hence more sophisticated concepts have to
be used as in \cite{fabbri/jaeger/johnson/keller:2005}. In
Section~\ref{Rees}, we reproduce some examples of point-distal but
non-distal torus homeomorphisms due to Rees \cite{rees:1979}. We say a
homeomorphism $f$ of a compact metric space $(X,d)$ is {\em distal} if
$\inf_{n\in\N} d(f^n(x),f^n(y))>0$ for all $x \neq y\in X$. A point
$x\in X$ is called {\em distal} if $\inf_{n\in\N} d(f^n(x),f^n(y))>0$
for all $y\in X$, and $f$ is called {\em point-distal} if there exists
a distal point.

\begin{proposition}[Rees, \cite{rees:1979}] There exist point-distal
  but non-distal almost automorphic minimal homeomorphisms of the two-torus.
\end{proposition}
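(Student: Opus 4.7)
The plan is to redo the inductive blow-up of Theorem~\ref{t.main} in a circle-fibred setting, starting from an irrational translation of the torus. Take $T:\torus\to\torus$ given by $T(\theta,x)=(\theta+\alpha,x+\beta)$ with $1,\alpha,\beta$ rationally independent, so that $T$ is a minimal almost periodic homeomorphism, viewed as a skew product over $(\kreis,R_\alpha)$ with constant fibre translations $R_\beta$. Fix a single dense orbit $\{p_n=T^n p_0\}_{n\in\Z}$; since $\alpha$ is irrational, the first coordinates $\theta_n=\theta_0+n\alpha$ are pairwise distinct, so each $p_n$ is the unique orbit point lying in its base fibre.

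I would then run the same inductive procedure that underlies Theorem~\ref{t.main}, but now replacing each $p_n$ by a small vertical arc $J_n\ssq\{\theta_n\}\times\kreis$ of length $\ell_n$, with $\sum_{n\in\Z}\ell_n<\infty$ and with consecutive arcs mapped to each other in an orientation-preserving monotone way. Because only one blow-up occurs per base fibre and the arcs are small, each fibre of the modified space is still homeomorphic to $\kreis$ and the total space remains homeomorphic to $\torus$. The outcome is a homeomorphism $\hat T:\torus\to\torus$ together with a continuous collapse map $h:\torus\to\torus$ satisfying $h\circ\hat T=T\circ h$, $h^{-1}(p_n)=J_n$ for every $n\in\Z$, and $h^{-1}(y)$ a singleton for every $y\notin\{p_n:n\in\Z\}$.

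From these properties the three required features follow. The map $h$ is an almost 1-1 factor map onto the almost periodic $T$, so $\hat T$ is almost automorphic. To see minimality, observe that any closed $\hat T$-invariant set $K\ssq\torus$ has $h(K)=\torus$ by minimality of $T$, and for every $y\notin\{p_n\}$ the singleton $h^{-1}(y)$ therefore lies in $K$; as $\torus\smin\bigcup_n J_n$ is dense, one concludes $K=\torus$. The map $\hat T$ is not distal: the two endpoints of $J_0$ are sent by $\hat T^n$ to the endpoints of $J_n$, at distance $\ell_n\to 0$ as $|n|\to\infty$. On the other hand, every $p\notin\bigcup_n J_n$ is a distal point: if $d(\hat T^{n_k}p,\hat T^{n_k}q)\to 0$ along some subsequence for a $q\neq p$, continuity of $h$ forces $d(T^{n_k}h(p),T^{n_k}h(q))\to 0$, and since $T$ is an isometry this gives $h(p)=h(q)$; but $h^{-1}(h(p))=\{p\}$ contradicts $q\neq p$.

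The principal obstacle is arranging the blow-up so that $\hat T$ is a genuine torus homeomorphism, with arc lengths decaying quickly enough and with monotone interpolations chosen consistently along the full dense orbit. This is precisely the content of the inductive scheme developed for the interval-fibred case in Section~\ref{BlowUp}, and its transfer to circle fibres is straightforward because every blow-up is a strictly local modification inside a single fibre, with no boundary curves that must be preserved.
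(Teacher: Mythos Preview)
Your proposal is correct and follows the same strategy as the paper: blow up a single orbit of a minimal torus rotation to vertical arcs, obtain an almost $1$-$1$ factor map $h$ onto the rotation, and read off minimality, almost automorphy, non-distality (endpoints of $J_0$) and point-distality (injectivity points of $h$) exactly as you do. Your verification of these four properties is in fact more explicit than the paper's sketch.

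One correction of terminology, though: there is no ``inductive scheme'' in Section~\ref{BlowUp}. The construction there is a one-shot measure-theoretic one --- a fibre measure $\mu_\theta$ with atoms along the orbit is written down directly, $h_\theta$ is its quantile function, and $\hat f$ is obtained by conjugating on the atom-free fibres and extending by uniform continuity. The paper's proof of the Rees proposition simply cuts $\torus$ along a horizontal circle $\kreis\times\{0\}$ that misses the chosen orbit, applies the annulus construction of Theorem~\ref{T:Main} verbatim, and reglues; continuity across the cut is automatic since the fibre measures have no atom at $0$. So your ``transfer to circle fibres'' is handled in the paper not by redoing anything, but by this cut-and-glue reduction to the interval case. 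Otherwise the two arguments coincide.
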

\bigskip

\noindent{\bf Acknowledgments.}
Roman Hric was supported by Marie Curie Reintegration Grant
PERG04-GA-2008-239328, by the Slovak Research and Development Agency,
grant APVV-0134-10 and by VEGA, grant 1/0978/11.  Tobias J\"ager and
two visits of Roman Hric to TU Dresden were supported by an
Emmy-Noether-grant of the German Research Council (DFG-grant JA
1721/2-1).  Further, this work is related to the activities of the
Scientific Network ``Skew product dynamics and multifractal analysis''
(DFG grant OE 538/3-1).

\section{Preliminaries}\label{Preliminaries}

For a homeomorphism $f$ of a topological space $X$ we define the {\em
  orbit} of a point $x \in X$ to be the set $\Orb_f(x) := \{ f^n(x)
\ssep n\in\Z\}$ (we omit the subscript for a fixed $f$). We also put
$x_n:=f^n(x)$ for $n\in\Z$.  A subset $A\ssq\R$ is called {\em
  syndetic} if the connected components of $\R\smin A$ are uniformly
bounded in length. Given a homeomorphism $f$ of a compact metric space
$(X,d)$, a point $x\in X$ is {\em almost periodic} (sometimes called
{\em uniformly} or {\em syndetically recurrent}) if for all
neighbourhoods $U$ of $x$ the set $N(x,U)=\{n\in\Z\mid f^n(x)\in U\}$
is syndetic. A homeomorphism $f$ is called {\em pointwise almost
  periodic} if every point is almost periodic, and $f$ is called {\em
  almost periodic} if the set $N(f,\eps)=\{n\in\Z \mid
d(f^n,\Id_X)<\eps\}$ is syndetic for all $\eps>0$. As mentioned
before, this is equivalent to the equicontinuity of $\{f^n \sep
n\in\Z\}$.  Further, in this case $f$ is an isometry with respect to
the metric $\hat d$ given by $\hat d(x,y):=\sup_{n\in\Z}
d(f^n(x),f^n(y))$, which is equivalent to $d$. Consequently, $\Id_X :
(X,d)\to(X,\hat d)$ conjugates $f$ to an isometry
\cite{ellis1969lectures,auslander1988minimal}.

A homeomorphism $f$ is called {\em almost automorphic} if there exists
a point $x\in X$ such that whenever the limit $\tilde x=\kLim
f^{n_k}(x)$ exists for some sequence $\kfolge{n_k}$ of integers, then
$x=\kLim f^{-n_k}(\tilde x)$. As mentioned above, the Veech Structure
Theorem \cite{veech1965almost} states that $f$ is almost automorphic
if and only if it has an almost 1-1 factor which is almost periodic.
Here, we say $(Y,g)$ is a {\em factor} of $(X,f)$ if there exist a
continuous surjection $h: X\to Y$, called a {\em semiconjugacy} or
{\em factor map}, such that $h\circ f=g\circ h$. Conversely, $(X,f)$
is called a {\em (topological) extension} of $(Y,g)$ in this case. The map $h$ is
called {\em almost 1-1} if the set of points $y\in Y$ with exactly one preimage
under $h$ is dense in $Y$, and $(Y,g)$ is called an {\em almost 1-1 factor} in this
case. A minimal set $M$ is called {\em almost periodic} ({\em almost
  automorphic}) if $f_{|M}$ is almost periodic (almost
  automorphic).  \smallskip

Now, let $f$ be an $\alpha$-forced increasing interval map.
An $f$-{\em invariant graph} is an invariant
set $\Gamma$ of the form $\Gamma=\{(\theta,\gamma(\theta)\mid
\theta\in\Theta\}$, where $\gamma:\Theta\to \III$ is a Borel measurable
function. Note that the invariance of $\Gamma$ implies that
\begin{equation}
  \label{eq:4}
  f_\theta(\gamma(\theta)) \ = \ \gamma(\alpha(\theta)) 
  \quad \textrm{ for all } \theta\in\Theta \ .
\end{equation}
If $\gamma$ is continuous, then $\Gamma$ is an $f$-invariant curve in
the sense defined above. It is known that if $\alpha$ is almost
periodic minimal, then the $f$-invariant curves are exactly the almost
periodic minimal sets of $f$
\cite{ellis1969lectures,auslander1988minimal}.

\section{The basic construction: Denjoy's examples revisited} \label{Denjoy}

Denjoy homeomorphisms of the circle are usually constructed by
`blowing up' one (or at most countably many) orbits of an irrational
rotation to wandering intervals. The resulting system is then
semiconjugate to the original rotation, and the corresponding factor
map collapses exactly the blown up intervals while being injective
elsewhere. Depending on the way this basic idea is formalised, the
flavour of this construction can be more combinatorial, topological or
analytic.

Starting point for our construction is an implementation with a strong
measure-theoretic accent. In the case of Denjoy examples, we start by
defining a measure which has an atomic part, supported by the orbit we
want to blow up, and a Lebesgue part to give the measure full
topological support. More precisely, we fix an irrational rotation
$R:\kreis\to\kreis$ and an arbitrary starting point $x_0$ and let
$x_n=R^n(x_0)$ for all $n\in\Z$. Given a sequence of strictly
positive weights $(a_n)_{n\in\Z}$ with $\sum_{n\in\Z} a_n<1$, we let
$b:=1-\sum_{n\in\Z}a_n>0$ and define
\begin{equation}
\nu \  := \ \sum_{n \in \Z} a_n \delta_{x_n} + b\, \Leb_{\T^1}
\end{equation}
where $\delta_x$ denotes the Dirac measure at $x$. Now, instead of
directly constructing the Denjoy homeomorphism $f$, we first use $\nu$
to construct the semiconjugacy $h$ which we use later to define $f$ as
an extension of $R$.  As mentioned, in order to do so it should
collapse a sequence of intervals (which will later be wandering) and
map them to the points $x_n$ of the blow-up orbit. It turns out that
we can define $h$ to be the quantile function of $\nu$, which is the
left-inverse of the distribution function (see
Figure~\ref{f.quantile}). More precisely, we assume without loss of
generality that $0\notin\{x_n \ssep n\in\Z\}$ and let
\begin{equation}
  h(x) \ := \ \min\{y\in \kreis \sep \nu[0,y]\geq x\} \ ,
\end{equation}
where we identify $\kreis$ with $[0,1)$ for taking the minimum. Note
that if we let $g(y):=\nu[0,y]$, then due to the Lebesgue component of
$\nu$ the distribution function $g$ is strictly increasing with
discontinuities of size $a_n$ at the atoms $x_n$, and we have $h\circ
g = \Id_{\kreis}$. The details are given in Lemma~\ref{l.basic} below.
For the use in the later sections, it also includes the case where
$\nu$ is non-atomic.
\begin{center}
\begin{figure}
  \epsfig{file=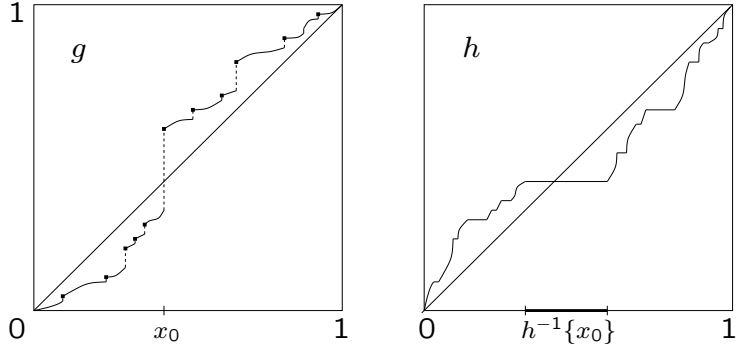, clip=, width=0.8\textwidth}
  \caption{\small The distribution function $g$ on the left and its
    left-inverse, the quantile function $h$, on the right. Atoms of
    $\mu$ correspond to discontinuities of $g$ and to plateaus of
    $h$. \label{f.quantile}}
\end{figure}
\end{center}

If we now let $I_n:=h^{-1}\{x_n\} =: [c_n,d_n]$, then $h$ is injective
on the set
\begin{equation}
  A \ := \ \kreis \smin \bigcup_{n\in\Z} [c_n,d_n) \ .
\end{equation}
In fact, $h$ is even a bijection from $A$ to $\kreis$, and we can
therefore define a map $\tilde f$ on $A$ by
\begin{equation} \label{e.denjoy-def}
 \tilde f\ := \ h_{|A}^{-1}\circ R \circ h_{|A}  \ .
\end{equation}
As shown in Lemma~\ref{l.denjoy-continuity} below, this map $\tilde f$ is
uniformly continuous and strictly order-preserving on $A$. Further, we have
\begin{equation} \label{e.cn_to_cn+1}
\tilde f(c_n)\ = \ h_{|A}^{-1}\circ R \circ
  h_{|A}(c_n) = h_{|A}^{-1}\circ R (x_n) = h_{|A}^{-1}(x_{n+1}) \ = \ c_{n+1}
\end{equation}
for all $n\in\Z$. We can therefore extend $\tilde f$ to a continuous
map $f:\kreis\to\kreis$ by sending each interval $I_n$ to $I_{n+1}$ in
a continuous and monotone way. As we will see below, this yields a
homeomorphism of the circle, and since the intervals $I_n$ are
wandering by construction, $f$ is the desired Denjoy example.
\smallskip

We prove the above claims in the following three lemmas, which will
also be useful in the later sections. Suppose $X$ is a topological
space, equipped with a Borel probability measure $\nu$. The {\em
  topological support} of $\nu$ is defined as
\[
\textrm{supp}(\nu)  \ := \ \left\{ x\in X \sep \nu(U) > 0 \
\textrm{ for any open neighbourhood } U \textrm{ of } x \right\} \ .
\]
If $\nu\{x\}>0$ we call $x$ an {\em atom} and say $\nu$ is {\em
  atomic} in $x$. We call $\nu$ {\em non-atomic} if it has no atoms.
Given a measurable map $h: X \to X$ we denote the {\em push-forward}
of $\nu$ by $h^*\nu := \nu \circ h^{-1}$, that is $h^*\nu(A) :=
\nu(h^{-1}A)$.

\begin{lemma} \label{l.basic} Let $X=\III$ or $\kreis$ and suppose
  $\nu$ is a probability measure on $X$.  Then
\begin{equation}\label{e.h-def}
h(x) := \min \{ y \in X \sep \nu[0,y] \geq x \}
\end{equation}
satisfies $h^* \Leb_X= \nu$.  Moreover \romanlist
\item if $\textrm{supp}(\nu)=X$ then $h$ is continuous and surjective.
  In this case $h^{-1}\{y\} = [\nu[0,y),\nu[0,y]]$;
\item if $\nu$ is non-atomic then $h$ is injective.  \listend It
  follows from (i) and (ii) that when $\nu$ is both non-atomic and has
  full topological support, then $h$ is a homeomorphism with inverse
  $h^{-1}(y) = \nu[0,y]$.
\end{lemma}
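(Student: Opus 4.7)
The plan is to view $h$ as the standard quantile function of $\nu$: the generalised left-inverse of the cumulative distribution $g(y):=\nu[0,y]$. All four assertions then reduce to classical properties of $g$ --- monotonicity, right-continuity, strict monotonicity characterised by full support, and continuity characterised by non-atomicity.

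First I would check that $h$ is well-defined and non-decreasing. The sublevel set $\{z : g(z)\geq x\}$ is non-empty (since $g(1)=1\geq x$) and, thanks to right-continuity of $g$ (continuity from above for finite measures applied to the decreasing family $[0,z]$), it is closed, so its minimum exists. Monotonicity of $h$ is immediate, since these sublevel sets shrink as $x$ grows. The key identity
\[
h(x) \leq y \quad \equi \quad x \leq \nu[0,y]
\]
follows at once: $(\Leftarrow)$ puts $y$ in the sublevel set; $(\Rightarrow)$ uses monotonicity of $g$ together with $g(h(x)) \geq x$. From this, $h^{-1}[0,y]=[0,\nu[0,y]]$ as subintervals of $X$, so $h^*\Leb_X$ agrees with $\nu$ on the $\pi$-system of initial intervals, and a standard monotone class argument yields $h^*\Leb_X = \nu$ on the full Borel $\sigma$-algebra.

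For (i), assume $\supp(\nu)=X$. Then every non-degenerate open arc has positive $\nu$-measure, so $g$ is strictly increasing; in particular $g(z)<g(y-)$ whenever $z<y$. Combining this with the equivalence above, one obtains
\[
h(x)=y \quad \equi \quad \nu[0,y) \leq x \leq \nu[0,y] ,
\]
which is precisely $h^{-1}\{y\}=[\nu[0,y),\nu[0,y]]$; taking $x=\nu[0,y]$ gives surjectivity. Continuity then follows from the well-known fact that a non-decreasing surjection onto a connected space can have no jumps. Assertion (ii) is an immediate consequence of the same fibre formula: if $\nu$ has no atoms then $\nu[0,y)=\nu[0,y]$ for every $y$, so each fibre $h^{-1}\{y\}$ collapses to a single point and $h$ is injective.

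The concluding statement then requires no extra work: under both hypotheses $h$ is a continuous bijection between the compact metric spaces $X$ and $X$, hence a homeomorphism, and the unique point of $h^{-1}\{y\}=[\nu[0,y),\nu[0,y]]$ is $\nu[0,y]$. The only genuine subtlety I anticipate is the circle case, which requires choosing a reference point ``$0$'' and interpreting $[0,y]$ coherently under the $\ZZZ$-identification; this can be handled by cutting at any convenient point and reducing to the interval case, which is the main --- though minor --- bookkeeping obstacle in the argument.
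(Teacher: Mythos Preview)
Your proof is correct and follows essentially the same route as the paper: both compute $h^{-1}[0,y]=[0,\nu[0,y]]$ via the equivalence $h(x)\leq y \iff x\leq\nu[0,y]$, then under full support obtain the fibre formula $h^{-1}\{y\}=[\nu[0,y),\nu[0,y]]$ and deduce continuity, and finally use non-atomicity to collapse the fibres. The only cosmetic differences are that the paper argues continuity by showing preimages of open intervals are open (rather than via ``monotone surjections have no jumps'') and omits the explicit $\pi$-system step.
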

Here, if $X=\kreis$ we again identify $\kreis$ with $[0,1)$ for taking
the minimum in (\ref{e.h-def}). Note that this minimum exists due to
continuity from the right of $y \mapsto \nu[0,y]$. 
\proof Suppose $h$
is defined by (\ref{e.h-def}). Then for any $y \in X$ we have
\[
h^{-1}[0,y] \  = \  \{x \in X \sep h(x) \in [0,y] \}
\ = \ \{x \in \III \sep \nu[0,y] \geq x \}
\ = \  [0, \nu[0,y]] \ .
\]
Consequently $\Leb_X(h^{-1}[0,y]) = \nu[0,y]$ and hence $h^* \Leb_X = \nu$.  In
order to show (i), assume that $\textrm{supp}(\nu)=X$. Then
\begin{eqnarray*}
h^{-1}[0,y) & = & \{ x \in X \sep h(x) < y \} \\
& = &  \{ x \in X \sep \exists z < y : x \leq \nu[0,z] \} \\
& = &  \{ x \in X \sep x < \nu[0,y) \}
\ = \  [0, \nu[0,y)) \ .
\end{eqnarray*}
Analogously we get $h^{-1}(y,1] = (\nu[0,y],1]$, and this yields the formula for
$h^{-1}\{y\}$. Furthermore, we see that preimages of open intervals are open,
and hence $h$ is continuous. Finally, if $\nu$ in non-atomic then for every
$y\in X$ we obtain $h^{-1}\{y\} = \{\nu[0,y]\}$, so $h$ is injective in this
case.
\qed\medskip

\begin{lemma}\label{l.denjoy-continuity}
  The map $\tilde f$ defined in (\ref{e.h-def}) is strictly order-preserving and
  uniformly continuous on $A$.
\end{lemma}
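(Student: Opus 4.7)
My plan is to show that $\tilde{f}$ extends to a continuous map $f:\kreis\to\kreis$, from which uniform continuity of $\tilde{f}$ follows by compactness of $\kreis$. Strict monotonicity, on the other hand, will be essentially immediate from the composition structure of the definition.

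For monotonicity, the quantile function $h$ is non-decreasing by construction, and is constant with value $x_n$ on each plateau $I_n=[c_n,d_n]$. Since $h$ is injective on $A$, and the intersection $A\cap I_n$ consists of exactly one point (the one endpoint of $I_n$ retained in the definition of $A$), the restriction $h_{|A}:A\to \kreis$ is a strictly orientation-preserving bijection. The rotation $R$ and the inverse $h_{|A}^{-1}$ are both orientation-preserving, so $\tilde{f}=h_{|A}^{-1}\circ R\circ h_{|A}$ preserves the cyclic order strictly.

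For continuity, I would extend $\tilde{f}$ to $f:\kreis\to\kreis$ by declaring, on each gap interval $I_n=[c_n,d_n]$, that $f_{|I_n}$ be any strictly increasing homeomorphism onto $I_{n+1}$ (for definiteness, the affine map). By the identity $\tilde{f}(c_n)=c_{n+1}$ established in (\ref{e.cn_to_cn+1}) (with the analogous statement at $d_n$), this extension agrees with $\tilde{f}$ at the unique endpoint of each $I_n$ that lies in $A$. It then remains to verify continuity of $f$ at the endpoints of each gap; this means showing that the one-sided limit of $\tilde{f}(y)$ as $y$ approaches $c_n$ (respectively $d_n$) from within $A$ equals the value of the affine extension at that endpoint. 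Concretely, as $y\to c_n$ from below in $A$, the monotonicity and continuity of $h$ force $h(y)\to x_n$ from below, hence $R(h(y))\to x_{n+1}$ from below; since the plateau of $h$ at $x_{n+1}$ is $I_{n+1}$, the map $h_{|A}^{-1}$ sends values just below $x_{n+1}$ to points of $A$ just below $c_{n+1}$, giving limit $c_{n+1}$ as required. The one-sided limit at $d_n$ is analogous.

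The main technical point is bookkeeping: one must track carefully which side of the atom $x_{n+1}$ the value $R(h(y))$ approaches, and correlate this with the correct endpoint of $I_{n+1}$. This is mildly complicated by the asymmetric definition of $A$ (one endpoint of each gap is retained, the other removed), but involves no deep difficulty. Once $f$ is seen to be continuous on the compact space $\kreis$, uniform continuity of $\tilde{f}=f_{|A}$ follows at once. A direct approach bypassing the extension would use the summability $\sum_{n\in\Z} a_n<1$ to bound the ``jumps'' at gaps with small index and to isolate the finitely many large gaps, but the extension argument is cleaner, and the extension is in any case needed to produce the Denjoy homeomorphism immediately afterwards.
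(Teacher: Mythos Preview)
Your approach is genuinely different from the paper's: rather than a direct $\eps$--$\delta$ estimate, you propose to extend $\tilde f$ to a map $f$ on all of $\kreis$ and deduce uniform continuity from continuity plus compactness. This can be made to work, but as written there is a real gap. You claim that ``it then remains to verify continuity of $f$ at the endpoints of each gap,'' and that is not sufficient. Points of $A$ that are not gap endpoints may still be two-sided accumulation points of infinitely many gaps $I_m$, and continuity of the extended $f$ at such points is not addressed by your argument. Even at a gap endpoint you only check the limit along sequences from $A$; continuity of $f$ there also requires handling sequences that pass through the neighbouring gaps. Your computation of the one-sided limit along $A$ is correct, but by itself it does not yield continuity of $f$ on $\kreis$.

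The clean fix is to argue globally: your extension $f$ is a cyclic-order-preserving bijection of $\kreis$, since $\tilde f:A\to A$ is an order-preserving bijection, each affine piece $I_n\to I_{n+1}$ is an order-preserving bijection, and your endpoint computation shows the pieces fit together monotonically. Lifting to $\R$, a strictly monotone surjection of $\R$ onto $\R$ is automatically continuous, so $f$ is a homeomorphism. That one sentence is what your proposal is missing.

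For comparison, the paper proves uniform continuity directly via the measure identity $d(x,x')=\nu[h(x),h(x')]$, which unfolds as $b\cdot d(h(x),h(x'))+\sum_{x_n\in[h(x),h(x')]}a_n$; applying $\tilde f$ shifts the index in the sum by one, and summability of $(a_n)$ controls the difference. Your final paragraph alludes to exactly this argument. The advantage of the paper's computational route is that it serves as the template for the main construction in Section~\ref{BlowUp}, where one works on a dense set of fibres of the annulus and no global monotone-bijection shortcut is available.
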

\proof First, $\tilde f$ is strictly order-preserving on $A$ since this is true
for $h$, $h_{|A}^{-1}$ and $R$. In order to prove the uniform continuity, we
take arbitrary $x, x' \in A$ and look how their distance changes under the
action of $\tilde f$. We assume without loss of generality that
$d(x,x')=\Leb_X[x,x']$ ($x'$ is close to $x$ from the right) and use that
$h^*\Leb_X=\nu$ by Lemma~\ref{l.basic} to obtain
\begin{equation}\label{f-distance}
  d(x,x') \ = \ \nu[h(x),h(x')] \ = \ b \ d(h(x), h(x')) \ + \sum_{x_n \in [h(x), h(x')]} a_n \ .
\end{equation}
Now, since $h\circ f=R\circ h$ by definition, we obtain in the same way that 
\begin{equation}
  d(\tilde f(x),\tilde f(x')) \ =  \ b \ d(R(h(x)), R(h(x'))) \ + 
 \sum_{x_n \in [R(h(x)), R(h(x'))]} a_n \ .
\end{equation}
Since the rotation $R$ is an isometry and $x_n \in [R(h(x)), R(h(x'))]$ is equivalent to
$x_{n-1} \in [h(x), h(x')]$, we can rewrite the last line as
\begin{equation}
  d(\tilde f(x),\tilde f(x')) \ =  \ b \ d(h(x), h(x')) \ + \sum_{x_n \in [h(x), h(x')]} a_{n+1} \ .
\end{equation}
Thus
\begin{equation}\label{e.dist}
\begin{split}
d(\tilde f(x),\tilde f(x')) & = \  d(x,x') \ + \sum_{x_n \in [h(x), h(x')]} a_{n+1} - 
   \sum_{x_n \in [h(x), h(x')]} a_n\\
  & < \ \quad d(x,x') \ + \sum_{x_n \in [h(x), h(x')]} a_{n+1} \ . 
\end{split}
\end{equation}
Now fix $\eps>0$, choose $N \in \NNN$ such that $\sum_{|k| \geq N+1} a_k <
\varepsilon / 2$ and let 
\[
\delta \ = \  \min \{ \varepsilon / 2, \min \{a_k \sep |k|
\leq N+1 \} \} \ .
\]
Note that by (\ref{f-distance}) the fact that $d(x,x')<\delta$ implies that
$x_n\notin [h(x),h(x')]$ for all $n\leq N+1$.  Taking $x,x' \in A$ arbitrary
such that $d(x,x') < \delta$, the first term on the right side in (\ref{e.dist})
is obviously smaller than $\varepsilon / 2$, and the same is true for the second
one because from $x_n \in [h(x), h(x')]$ we get that $|n| > N+1$ and thus $|n+1| \geq
N+1$. Consequently, we obtain that $d(x,x')<\delta$ implies $d(\tilde f(x),\tilde f(x'))
< \eps$. Since $\delta$ did not depend on $x,x'$, this shows the uniform
continuity of $\tilde f$. \qed\medskip

\begin{lemma}
  The map $\tilde f$ defined in (\ref{e.denjoy-def}) can be extended to an
  orientation-preserving circle homeomorphism $f$ with wandering intervals
  $I_n$. Further, $f$ is semiconjugate to $R$. 
\end{lemma}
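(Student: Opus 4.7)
The plan is to extend $\tilde f$ to $\kreis$ in two stages. In the first stage, I use the uniform continuity of $\tilde f$ on $A$ (Lemma~\ref{l.denjoy-continuity}) to extend $\tilde f$ to the endpoints $c_n, d_n$ of each interval $I_n$. The identity~(\ref{e.cn_to_cn+1}) handles the endpoint lying in $A$, and the analogous one-sided-limit computation (using uniform continuity together with $h^*\Leb = \nu$ from Lemma~\ref{l.basic}) handles the other endpoint, yielding in both cases the corresponding endpoint of $I_{n+1}$. In the second stage, I define $f$ on each open interval $(c_n, d_n)$ to be the unique affine, orientation-preserving bijection onto $(c_{n+1}, d_{n+1})$. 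This produces a single candidate map $f : \kreis \to \kreis$.

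Next, I would show that $f$ is a homeomorphism of $\kreis$. Continuity on the interior of each $I_n$ is by construction, and continuity on $A$ extends to its closure by uniform continuity; at the boundary points $c_n, d_n$ the two one-sided limits agree by the matching in stage 1, so $f$ is continuous on all of $\kreis$. Strict monotonicity is inherited from $\tilde f$ on $A$ (via Lemma~\ref{l.denjoy-continuity}), from the affine pieces on each $I_n$, and from the matching at the boundary. Thus any lift to $\R$ is continuous and strictly increasing of degree one, so $f$ is an orientation-preserving homeomorphism of $\kreis$.

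For the remaining claims, the intervals $I_n = h^{-1}\{x_n\}$ are pairwise disjoint because the points $x_n$ are distinct, and $f(I_n) = I_{n+1}$ by construction; hence the family $\{f^k(I_n)\}_{k \in \Z} = \{I_{n+k}\}_{k \in \Z}$ is pairwise disjoint and each $I_n$ is wandering. The semiconjugacy $h \circ f = R \circ h$ holds on $A$ as the defining relation~(\ref{e.denjoy-def}) of $\tilde f$, and on each $I_n$ both sides equal $x_{n+1}$, because $h(I_n) = \{x_n\}$, $f(I_n) \subseteq I_{n+1}$, and $R(x_n) = x_{n+1}$.

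The step I expect to require the most care is the endpoint matching in stage 1: only one endpoint of each $I_n$ lies in $A$, so the behaviour of $\tilde f$ at the other endpoint has to be deduced from the uniform continuity estimate~(\ref{e.dist}) together with the measure-theoretic identity $h^* \Leb = \nu$, in order to ensure that the limit from within $A$ lands exactly at the correct endpoint of $I_{n+1}$ rather than in its interior. Once this matching is secured, the remainder of the argument (continuity, monotonicity, wandering, semiconjugacy) follows in a routine fashion.
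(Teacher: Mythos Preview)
Your proposal is correct and follows essentially the same route as the paper: extend $\tilde f$ by uniform continuity to $\overline{A}$, fill in each gap $(c_n,d_n)$ by a monotone continuous bijection onto $(c_{n+1},d_{n+1})$, and read off the semiconjugacy and the wandering property.

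One remark on the step you flag as delicate. You propose to identify the image of the ``other'' endpoint via the measure identity $h^*\Leb=\nu$ and an explicit one-sided limit. The paper sidesteps this computation entirely: once $\tilde f$ is known to extend continuously and monotonically to $\overline{A}$, and one endpoint of $I_n$ is already known to go to the matching endpoint of $I_{n+1}$, monotonicity forces the remaining endpoint to land at the remaining endpoint of $I_{n+1}$ (approaching from inside $A$ on either side of $I_n$ and using that $\tilde f$ is a bijection of $A$ onto itself). So the endpoint matching is in fact cheaper than your outline suggests, and no measure-theoretic calculation is needed there. Apart from this, and your concrete choice of affine interpolants where the paper allows any monotone continuous filling, the arguments coincide.
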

\proof In fact, there is almost nothing to prove anymore. The map $\tilde f$ is strictly
order-preserving of $A$, and by (\ref{e.cn_to_cn+1}) it maps the left endpoint
$c_n$ of the intervals $I_n$ to $c_{n+1}$. The right endpoints $d_n$ are not
contained in $A$ a priori, but if we extend $\tilde f$ continuously to the
closure of $A$ then by monotonicity and continuity $d_n$ has to be sent to
$d_{n+1}$ as well. Therefore, we may extend $\tilde f$ further by defining
$f_{|(c_n,d_n)} :(c_n,d_n)\to(c_{n+1},d_{n+1})$ in a more or less arbitrary way
(as long as it is monotone and continuous). 

The equation $h \circ \tilde f = R\circ h$ holds on $A$ by definition, and since
$h$ collapses the gaps $I_n=[c_n,d_n]$ to single points it extends to all of
\kreis, independently of the choice of $f_{|\inte(I_n)}$. Finally, since $f(I_n)=I_{n+1}$
for all $n\in\Z$ and all these intervals are disjoint, they are obviously
wandering. \qed

\section{Construction of almost automorphic minimal sets}
\label{BlowUp}

The following theorem is the core result of the paper. Due to the
particular attention paid to qpf systems, and also for the sake of
readability, we formulate this result specifically in this setting.
However, it can be straightforwardly generalized for a wider class of
driving forces, and this is done in the next section. We first
construct a special extension $\hat f$ of a given qpf system $f$ with
required properties. For understanding how the changed dynamics
exactly works, the crucial part of the proof is the one concerned with
the continuity of $\hat f$.  As a corollary of the construction, we
obtain an example of an almost automorphic minimal set which is not
filled-in.

We work with maps strictly monotone on the fibers. These may not be
homeomorphisms since they may fail to be surjective, but in this case
we can easily modify the system to get a homeomorphism without
affecting the original dynamics --- we simply extend the map linearly on
the fibers to a bigger annulus such that the resulting map is a
homeomorphism there.  Hence, for our purposes we can assume, without
loss of generality, our maps to be homeomorphisms.

\begin{theorem}\label{T:Main}
  Let $f : \AAA \to \AAA$ be a qpf increasing interval map
  and $\Gamma=\{(\theta,\gamma(\theta))\mid\theta\in\TTT^1\}$ an
  $f$-invariant curve with $\gamma:\TTT^1 \to \inter \III$.  Then
  there exists a topological extension $\hat f : \AAA \to \AAA$ of $f$
  with a factor map $h$ from $\hat f$ to $f$,
  $h(\theta,x)=(\theta,h_\theta(x))$, such that the following holds:
\romanlist
\item $\hat f$ is a qpf increasing interval map;
  \item all the fibre maps $h_\theta$ are non-decreasing;
  \item $h$ is injective on the complement of $h^{-1}(\Gamma)$;
  \item for a countable number of points in $\Gamma$ the preimage
    under $h$ is a vertical segment, for all other points in $\Gamma$
    it is a singleton;
  \item $h^{-1}(\Gamma)$ does not contain any graph of a continuous
    curve $\eta: \TTT^1 \to \III$;
  \item $h^{-1}(\Gamma)$ is pinched and contains an almost
    automorphic minimal set which is not almost periodic.
\listend
\end{theorem}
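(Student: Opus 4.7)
The plan is to adapt the Denjoy-type construction of Section~\ref{Denjoy} to the fibred setting, by blowing up the points of a single dense $\alpha$-orbit on $\Gamma$ into vertical segments whose lengths form a summable sequence. As in the Denjoy case, I will first build the factor map $h$ from a family of fibrewise quantile functions, then define $\hat f$ by conjugation with $h$ on its injective part, extending across the blown-up segments in the only way compatible with monotonicity.

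I would begin by fixing $\theta_0\in\TTT^1$, setting $\theta_n:=\alpha^n(\theta_0)$ (dense in $\TTT^1$ by the irrationality of $\alpha$), and choosing weights $a_n>0$ with $\sum_{n\in\ZZZ}a_n<|\III|$ and $a_n\to 0$ as $|n|\to\infty$. The point $(\theta_n,\gamma(\theta_n))\in\Gamma$ will be blown up to a vertical segment of length $a_n$. I would then construct, for each $\theta\in\TTT^1$, a Borel probability measure $\nu_\theta$ on $\III$ of full topological support such that $\nu_{\theta_n}$ has an atom of weight $a_n$ at $\gamma(\theta_n)$, every other $\nu_\theta$ is non-atomic, and the cumulative distribution function $G_\theta(y):=\nu_\theta[0,y]$ is jointly continuous in $(\theta,y)$. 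Setting $h_\theta(\hat x):=\min\{y\in\III\sep G_\theta(y)\geq\hat x\}$ and $h(\theta,\hat x):=(\theta,h_\theta(\hat x))$, a fibrewise application of Lemma~\ref{l.basic} shows that each $h_\theta$ is non-decreasing and continuous, that it is a homeomorphism when $\nu_\theta$ is non-atomic, and that it has a single plateau of length $a_n$ at height $\gamma(\theta_n)$ when $\theta=\theta_n$; joint continuity of $h$ follows from the joint continuity of $G_\theta$ combined with the full support property.

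With $h$ in hand I would define $\hat f:=h^{-1}\circ f\circ h$ on the injective set $\AAA\smin h^{-1}(\Gamma)$, following~(\ref{e.denjoy-def}), and prove uniform continuity there by the argument of Lemma~\ref{l.denjoy-continuity} adapted to the fibred setting (the base map $\alpha$ being an isometry makes the extra variable essentially passive). Extending $\hat f$ over each segment $\{\theta_n\}\times h_{\theta_n}^{-1}(\gamma(\theta_n))$ affinely onto $\{\theta_{n+1}\}\times h_{\theta_{n+1}}^{-1}(\gamma(\theta_{n+1}))$, consistent with the image of the endpoints in the spirit of~(\ref{e.cn_to_cn+1}), produces the desired $\alpha$-forced increasing interval map $\hat f$ of which $f$ is a factor via $h$. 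Items~(i) and~(ii) are then built in; (iii) holds because plateaus of $h_\theta$ occur only at $\gamma(\theta)$ and only for $\theta\in\{\theta_n\}$, so $h$ is injective off $h^{-1}(\Gamma)$; and (iv) is immediate with the countable set being the orbit $\{(\theta_n,\gamma(\theta_n))\mid n\in\ZZZ\}$. For~(v), supposing $\graph(\eta)\ssq h^{-1}(\Gamma)$ for some continuous $\eta:\TTT^1\to\III$, the identity $h_\theta(\eta(\theta))=\gamma(\theta)$ determines $\eta(\theta)$ uniquely for $\theta\notin\{\theta_n\}$; by prescribing the density profiles in Step~2 asymmetrically around each $\theta_n$, the one-sided limits $\lim_{\theta\to\theta_n^{\pm}}\eta(\theta)$ can be forced to lie at opposite endpoints of $h_{\theta_n}^{-1}(\gamma(\theta_n))$, contradicting continuity. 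For~(vi), $h^{-1}(\Gamma)$ is $\hat f$-invariant and pinched (singleton fibre over every $\theta\notin\{\theta_n\}$), so any minimal subset $M\ssq h^{-1}(\Gamma)$ furnished by Zorn's lemma is pinched, whence the projection $\pi_1$ restricted to $M$ is an almost 1-1 factor map onto the almost periodic $(\TTT^1,\alpha)$; by the Veech Structure Theorem $M$ is almost automorphic, and it cannot be almost periodic, since by the Preliminaries it would then be the graph of a continuous curve inside $h^{-1}(\Gamma)$, contradicting~(v).

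The main obstacle is the construction of the measures $\nu_\theta$ together with the joint continuity of $G_\theta$ in Step~2. The naive choice $\nu_\theta=a_n\delta_{\gamma(\theta_n)}+(1-a_n)\Leb_\III$ at $\theta=\theta_n$ and $\nu_\theta=\Leb_\III$ elsewhere yields an $h$ that is discontinuous on every blow-up segment; what is needed is a careful smoothing of the emergence of each atom through a continuous redistribution of the absolutely continuous component in a shrinking neighbourhood of $\theta_n$, chosen compatibly across all $n$ and producing no spurious plateaus off $\Gamma$. This is the technical heart of the argument, the fibred analogue of Lemma~\ref{l.basic}, and it is the reason why the construction in~\cite{beguin/crovisier/jaeger/leroux:2009} on which this proof is modelled is more involved than its one-dimensional Denjoy prototype.
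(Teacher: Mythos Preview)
Your outline follows the same Denjoy-style blow-up via fibrewise quantile functions as the paper, but two of the load-bearing steps are stated incorrectly or left unfilled.

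First, you ask for the cumulative distribution $G_\theta(y)=\nu_\theta[0,y]$ to be \emph{jointly continuous} while $\nu_{\theta_n}$ has an atom at $\gamma(\theta_n)$; these requirements are incompatible, since an atom forces a jump of $G_{\theta_n}$ in $y$. What is actually needed (and what the paper obtains) is weak continuity of $\theta\mapsto\nu_\theta$, which, together with full support, yields continuity of the quantile map $h$ but not of $G$. The paper achieves this by building the atomic part from a single measure $\mu^0$ whose fibre at $\theta^*$ is $\delta_{\gamma(\theta^*)}$ but whose neighbouring fibres are normalised Lebesgue on a shrinking interval $[\psi(\theta),\varphi(\theta)]$ (so the atom appears as a weak limit), and then setting $\mu^n:=f^{n*}\mu^0$ and $\mu:=\sum_n a_n\mu^n+b\,\Leb$.

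Second, and more seriously, your plan for the uniform continuity of $\hat f$ (``adapt Lemma~\ref{l.denjoy-continuity}; the base variable is essentially passive'') does not go through without the push-forward structure $\mu^n=f^{n*}\mu^0$. In the paper's proof the series for $|x-x'|$ and for $|\hat f_2(\theta,x)-\hat f_2(\theta',x')|$ have the \emph{same} terms $r_n$ up to an index shift $n\mapsto n-1$, precisely because applying $f$ turns $\mu^n$ into $\mu^{n+1}$; this is what allows one to bound the single potentially large term $a_{n^*}r_{n^*-1}$ by $a\cdot a_{n^*-1}r_{n^*-1}$, which is controlled by the preimage distance. With generic measures $\nu_\theta$ lacking this equivariance, the base variable is not passive at all: two nearby $\theta,\theta'$ close to some $\theta_{n^*}$ can produce a large discrepancy in the image series that is not visible in the preimage series. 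The ``careful smoothing'' you allude to in your last paragraph is exactly the paper's $\mu^0$-plus-push-forward construction, and the argument does not close without it.
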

\begin{center}
\begin{figure} 
  \epsfig{file=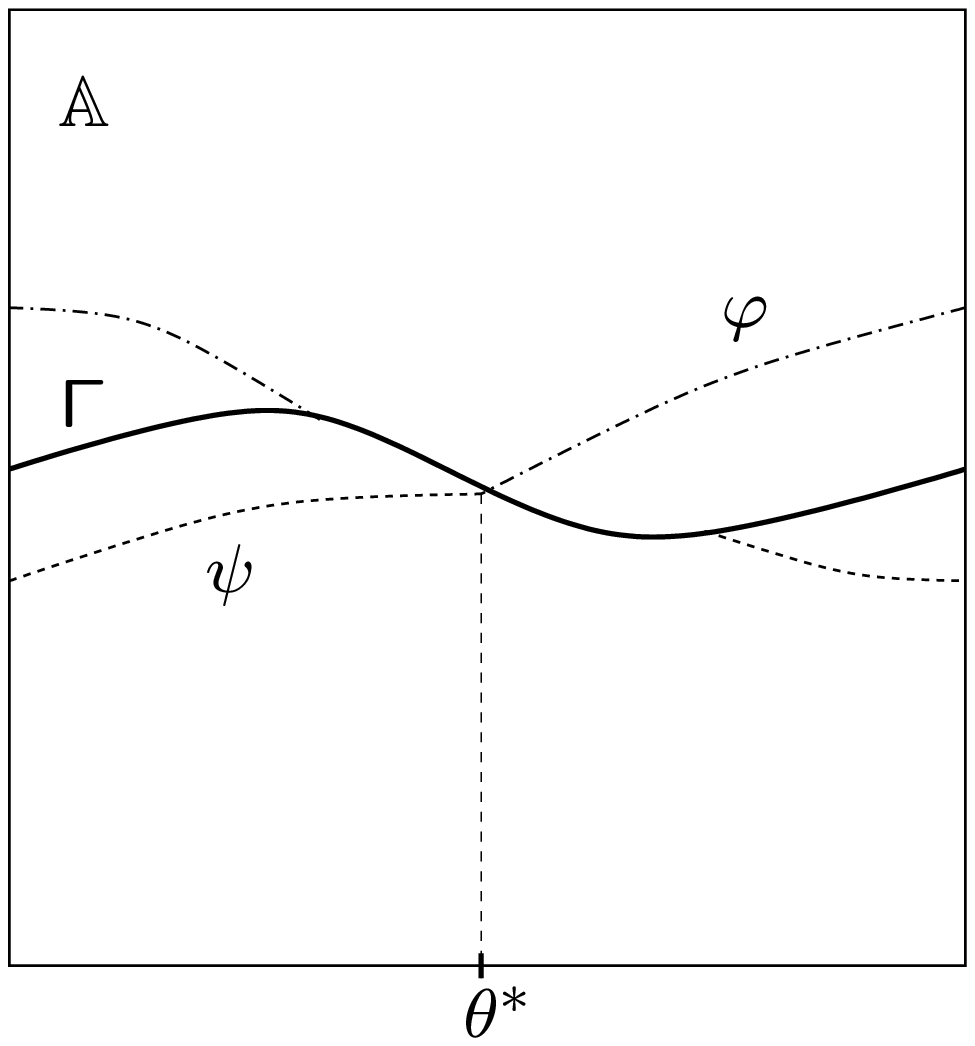, clip=, width=0.5\textwidth}
  \caption{\small Choice of the functions $\varphi$ and $\psi$.\label{f.mu}}
\end{figure}
\end{center}
\begin{proof}
  Denote by $R$ the driving irrational rotation of the system and put
  $\theta_{n}:=R^n(\theta)$ for $\theta\in\kreis$ and $n \in \Z$.
  Analogously to the previous section, fix a sequence $(a_n)_{n\in\ZZZ}$
  of strictly positive weights such that $\sum_{n \in \ZZZ} a_n < 1$
  and put $b := 1 - \sum_{n \in \ZZZ} a_n > 0$.  Fix an arbitrary
  $\theta^* \in \TTT^1$ and choose continuous functions $\varphi,
  \psi: \TTT^1 \to \III$ with the following properties (compare
  Figure~\ref{f.mu}):
\begin{enumerate}
\item $\psi \leq \gamma \leq \varphi$;
\item $\varphi=\gamma$ ($\psi=\gamma$) on a left (right) neighborhood
  of $\theta^*$;
\item $\psi < \varphi$ on $\TTT^1 \setminus \{\theta^*\}$.
\end{enumerate}
We will define a new measure $\mu$ on $\AAA$ via its fibre measures
$\mu_\theta$, that is, we first define a family
$(\mu_\theta)_{\theta\in\kreis}$ and then we let $\mu(A) :=
\int_{\TTT^1} \mu_\theta(A_\theta) d\theta$ for any measurable set $A
\subseteq \AAA$.  To that end, we first define a measure $\mu^0$ on
$\AAA$ via its fibre measures
\[
\mu_{\theta^*}^0 := \delta_{\gamma(\theta^*)} \qquad \mathrm{and} \qquad
\mu_\theta^0 := \frac{\Leb_{\vert
    [\psi(\theta),\phi(\theta)]}}{\varphi(\theta)-\psi(\theta)} \qquad \mathrm{for}
\; \theta \neq \theta^* \ .
\]
In particular, this means that $\mu^0_\theta$ varies continuously with
$\theta$ in the weak topology and it is non-atomic for
$\theta\neq\theta^*$.  Now define fiber measures of $\mu$ using
push-forwards of previous measures
\[
\mu_\theta \ := \ \sum_{n \in \ZZZ} a_n \mu^n_{\theta} + b \, \Leb_{\III} 
\]
where
\[
\mu^n_\theta \ := \ f^{n*}_{R^{-n}(\theta)} \mu^0_{R^{-n}(\theta)}\ = \
\mu^0_{R^{-n}(\theta)} f^{-n}_\theta \ .
\]
Note here that by convention
$(f_{R^{-n}(\theta)}^n)^{-1}=f_\theta^{-n}$.  It is easy to check that
the resulting measure $\mu$ can altervatively be defined by
\[
\mu := \sum_{n \in \ZZZ} a_n \mu^n + b \, \Leb_{\AAA} \qquad \textrm{where}
\qquad \mu^n := f^{n*} \mu^0 = \mu^0 f^{-n} \ .
\]

\noindent {\em Definition and continuity of $h$.}  Define the map $h:
\AAA \to \AAA$ by $h(\theta,x) = (\theta,h_\theta(x))$ where
\begin{equation}\label{e.hdef}
h_\theta(x) := \min \{ y \in \III \sep \mu_\theta[0,y] \geq x \} \ .
\end{equation}

To see the continuity of $h$ on $\AAA$ we can consider the sequence of
maps $h^{(k)}$, $k \in \N$, defined in completely analogous fashion to
the way how $h$ is defined via $\mu$, but this time with measures
$\mu^{(k)} := \sum_{m=-k}^k a_n \mu^n + \left( 1 - \sum_{m=-k}^k a_n
\right) \Leb_\AAA$.  All the maps $h^{(k)}$ are continuous due to the
construction of the $\mu^n$ -- note that the mappings $\theta\mapsto
\mu_\theta^n$ are continuous with respect to the weak topology -- and
they converge on $\A$ uniformly to $h$.\medskip

\noindent {\em Definition and continuity of $\hat f$.}
First, we define $\hat f$ in a natural way on a dense subset
$\Lambda\subseteq\A$ and then extend it to a continuous map on the
whole space by using uniform continuity on the subset.  

For all $\theta \notin \Orb(\theta^*)$, the measure $\mu_\theta$ is
non-atomic, hence $h_\theta$ is invertible by Lemma~\ref{l.basic}(ii).
Hence, $h$ is invertible on $\Lambda := (\TTT^1 \setminus
\Orb(\theta^*)) \times \III$.  Now define $\hat f: \Lambda \to
\Lambda$ by $\hat f := (h_{|\Lambda})^{-1} \circ f \circ
(h_{|\Lambda})$. We claim that $\hat f$ is uniformly continuous on
$\Lambda$ and thus extends to a continuous map on $\TTT^1 \times
\III$.

In order to show this, we first write what exactly $\hat f(\theta,x)$
does:
\begin{equation}\label{e.f}\begin{split}
  (\theta,x) & \overset{h}{\longmapsto} \ (\theta,h_\theta(x)) \ 
  \overset{f}{\longmapsto} \ 
  \left(R(\theta),f_\theta(h_\theta(x))\right) \\
  & \overset{h^{-1}}{\longmapsto} \ \left(R(\theta),h_{R(\theta)}^{-1}(f_\theta(h_\theta(x)))\right) 
  \ = \  \left(R(\theta),\mu_{R(\theta)}[0,f_\theta(h_\theta(x))]\right) 
\end{split}
\end{equation}
Since in the first variable we have an isometry, it is sufficient to
prove that $\hat f$ is uniformly continuous in the second coordinate.
Hence, we have to prove that for any $\eps>0$ there exists $\delta>0$
such that for any $\theta, \theta' \in \TTT^1$ and any $x,x' \in
\III$
\begin{equation}\label{e.preimage}
|x-x'| < \delta
\qquad \mathrm{and} \qquad d(\theta,\theta')<\delta
\end{equation}
implies
\begin{equation}\label{e.image}
  \left| \mu_{R(\theta)}[0,f_\theta(h_\theta(x))] - 
\mu_{R(\theta')}[0,f_{\theta'}(h_{\theta'}(x'))] \right| < \eps \ .
\end{equation}

To compare the expressions for the preimage and the image in (\ref{e.preimage})
and (\ref{e.image}), we realize that $x = h_\theta^{-1}(h_\theta(x)) =
\mu_\theta[0,h_\theta(x)]$ (since $h_\theta$ is invertible on each fibre
considered). In order to unfold (\ref{e.preimage}) and (\ref{e.image}) into the
respective series, according to the definition of $\mu_\theta$, we use the
abbreviation
\begin{equation}
  \label{eq:3}
  r_n \ := \ r_n(\theta,\theta',x,x') \ := \ \left( \mu^0_{R^{-n}(\theta)}
    f^{-n}_{\theta} [0,h_\theta(x)] - \mu^0_{R^{-n}(\theta')}
    f^{-n}_{\theta'} [0,h_{\theta'}(x')] \right) \ . 
\end{equation}
Note that due to the construction of $\mu^0$ the function $r_n$ depends
continuously on its arguments $\theta,\theta',x$ and $x'$ whenever $\theta\neq
R^n(\theta^*)\neq \theta'$. This will be crucial in the following.  We obtain
\begin{equation}
\begin{split}\label{e.unfold-preimage}
  P & := \ |x-x'|\ = \ \left| \mu_\theta[0,h_\theta(x)] -
    \mu_{\theta'}[0,h_{\theta'}(x')] \right|  \\
  & =\ \left| \sum_{n\in\Z} a_n r_n + b \, (h_\theta(x) -
    h_{\theta'}(x')) \right|
\end{split}
\end{equation}
and
\begin{equation}\begin{split}
\label{e.unfold-image}
I & := \ | \mu_{R(\theta)}[0,f_\theta(h_\theta(x))]  - \mu_{R(\theta')}[0,f_{\theta'}(h_{\theta'}(x'))] | \\
& = \ \left| \sum_{n\in\Z} a_n r_{n-1}  + b \,
(f_\theta(h_\theta(x)) - f_\theta'(h_{\theta'}(x'))) \right|
\end{split}
\end{equation}
where in \eqref{e.unfold-image} we used the identity
\begin{eqnarray*} 
  \mu^0_{R^{-n}(R(\theta))} f^{-n}_{R(\theta)} [0,f_\theta(h_\theta(x))] 
&=& \mu^0_{R^{-n+1}(\theta)} f^{-n+1}_{\theta} [0,h_\theta(x)] \ .
\end{eqnarray*}

Before providing precise estimates, let us now first describe the main
idea.  We have to show that if $P$ is small and $\theta$ and $\theta'$
are close then $I$ is small as well. Since $h$ and $f$ are uniformly
continuous on $\AAA$, we will get an easy control over the two
Lebesgue terms.  To compare the rest, we take into account that the
terms with big $|n|$ do not contribute much, since $\sum_{|n|\geq
  N}a_n \to 0$ as $N\to\infty$. Terms with small $|n|$ may contribute
more, but due to the continuity properties of the $r_n$ this is only
possible if $\theta$ or $\theta'$ are close to $R^n(\theta^*)$. Again
by assuming $\theta$ and $\theta'$ to be close, we can ensure that this
happens for at most one small integer $n^*$, so there is at most one
big term in \eqref{e.unfold-preimage}. However, since the sum $P$ is small,
this term cannot be too big either, hence in the end all the
contributions in \eqref{e.unfold-preimage} are small.  Finally, when going from
$P$ to $I$ all the terms are only multiplied by a factor $a_n/a_{n-1}$,
which leads to $I$ being small as well.

In order to give precise estimates, fix $\eps > 0$. Take $N_1 \in
\NNN$ such that
\begin{equation}\label{e.2}
  \sum_{|n|
    \geq N_1} a_n < \eps/{8} \ . 
\end{equation}
Let $a:=\max\{1, a_n/a_{n-1} \ssep |n| \leq N_1+1\}$ and choose
$\delta_1 \in(0,\eps/8)$ such that for $\delta\leq\delta_1$ in
(\ref{e.preimage}) the Lebesgue terms in (\ref{e.unfold-preimage}) and
\eqref{e.unfold-image} are always smaller than $\eps/8a$.  At the same
time, choose $N_2 \geq N_1$ such that
\begin{equation} \label{e.3} \sum_{|n| \geq N_2} a_n <  \eps/{8a} \ .
\end{equation}
Further, choose $\delta_2\in(0,\delta_1)$ such that
$d(\theta,\theta')<\delta_2$ implies the existence of at most one
$n^*\in\N$ with $|n^*|\leq N_2+1$ and
$\min\{d(\theta,R^{n^*-1}(\theta^*)),d(\theta',R^{n^*-1}(\theta^*))\}
< \delta_2$ --- eg. take $\delta_2\leq\halb\min_{k=1}^{2N_2+1}
  d(R^k(\theta^*),\theta^*))$.

Now, according to the remark made after \eqref{eq:3}, there exists
$\delta_3\in(0,\min\{\delta_2,\eps/8a\})$ with the property that if
$d(\theta,\theta')<\delta_3$ and both $\theta$ and $\theta'$ are
$\delta_3$-apart from $R^n(\theta^*)$, then both $a_nr_n$ and
$a_{n+1}r_{n}$ and are smaller than $\eps/16aN_2$.

In particular, the two last facts together imply that if
$\delta\leq\delta_3$ in (\ref{e.image}), then there exists at most one
$n^*\in\N$ with $|n^*|\leq N_2$ and $a_{n^*}r_{n^*-1}\geq \eps
/16aN_2$.  If $n^*\leq N_1$, then using the smallness of the Lebesgue
part of (\ref{e.image}) and (\ref{e.2}), we obtain
\begin{equation} \begin{split} \label{e.estimateI}
  I & \leq \  a_{n^*}r_{n^*-1}\ + \  \left| \sum_{|n|\leq N_1, n\neq n^*} a_n r_{n-1}\right| 
  \ +    \ 
  \left| \sum_{|n|> N_1} a_n r_{n-1}\right| \\ & + \ b|f_\theta h_\theta(x)-f_{\theta'}h_{\theta'}(x')|
  \  \leq \     a_{n^*}r_{n^*-1} \ + \ \frac{\eps}{2} \ . 
\end{split}
\end{equation}
Note that if $n^*>N_1$ or if $n^*$ with the above properties does not
exist, then we immediately obtain $I<\eps/2$. Otherwise, we obtain
from \eqref{e.unfold-preimage} and the fact that
$P<\delta<\delta_3<\eps/8a$ that
\begin{eqnarray*}
  a_{n^*-1}r_{n^*-1} & \leq & P + \left| \sum_{|n|\leq N_2, n\neq n^*-1} a_n r_{n}\right| 
  \ + \  \left| \sum_{|n|> N_2} a_n r_n\right| \\ & + & b|h_\theta(x)-h_{\theta'}(x')|
  \  \leq \     \frac{\eps}{2a} \ . 
\end{eqnarray*}
Consequently, since $|n^*|\leq N_1$ and thus $a_{n^*}/a_{n^*-1}\leq
a$, we have that $a_{n^*}r_{n^*-1}\leq \eps/2$. Plugging this into
(\ref{e.estimateI}) yields $I \leq \eps$. Thus, we obtain altogether
that $\delta<\delta_N$ implies $I\leq \eps$, as required.  \medskip

\noindent {\em Strict monotonicity of $\hat f$.}
The strict monotonicity of $\hat f$ is equivalent to its
invertibi\-li\-ty. In order to prove this, it suffices to show that $\hat
f^{-1}$ is uniformly continuous on $\Lambda$ as well, since it then
extends to a continuous function on the closure and provides an
inverse for $\hat f$ on all of $\AAA$. However, since $\hat
f^{-1}=h^{-1}\circ f^{-1} \circ h$ on $\Lambda$, this follows in
exactly the same way as the uniform continuity of $\hat f_{|\Lambda}$
in the preceeding step. The only difference is that $f$ is replaced by
$f^{-1}$.\medskip

The map $\hat f$ is obviously an extension of $f$ via $h$ since $h
\circ \hat f = f \circ h$ holds on $\Lambda$ by the definition of
$\hat f$ and it carries over to the closure of $\Lambda$ by
continuity. Thus, we have now constructed $h$ and $\hat f$ with the
properties stated in (i) and (ii). Properties (iii) and (iv) follow
easily from Lemma~\ref{l.basic}(i), since the fibre measures
$\mu_\theta$ have atoms placed exactly on the points
$\gamma(\theta^*_n)$, $n\in\ZZZ$. Property (vi) is a direct
consequence of (v), see Section~\ref{Preliminaries}, such that it only
remains to prove (v).  \medskip

 \noindent {\em Non-existence of continuous curves in
   $h^{-1}(\Gamma)$.} Recall that by Lemma~\ref{l.basic}(i) we have
 $$h^{-1}\{\gamma(\theta)\}\ =\
 [\mu_\theta[0,\gamma(\theta)),\mu_\theta[0,\gamma(\theta)]] \ . $$ We
 will show that
 \begin{equation}
   \label{e.discontinuity}
   \lim_{\theta \nearrow \theta^*}\mu_\theta[0,\gamma(\theta)) - 
   \lim_{\theta \searrow \theta^*}\mu_\theta[0,\gamma(\theta)] \ = \ a_0 \ .
 \end{equation}
 This implies immediately that any curve contained in $h^{-1}(\Gamma)$
 must have a discontinuity of size $a_0$ at $\theta^*$ (and hence, by
 invariance, discontinuities of size $a_n$ at all $\theta_n$).

 It follows from the definition of $\mu^0$ and the continuity of
 $\gamma$ that the mapping $\theta \mapsto
 \mu^0_\theta[0,\gamma(\theta)]$ is continuous on $\TTT^1 \setminus
 \{\theta^*\}$. Consequently, for the push-forwards $\mu^n$, $n\neq
 0$, the mappings $\theta \mapsto \mu^n_\theta[0,\gamma(\theta)]$ are
 continuous on $\TTT^1\setminus\{\theta^*_n\}$ and, in particular,
 have $\theta^*$ as a continuity point. Hence $\theta^*$ is a
 continuity point of $\theta \mapsto
 (\mu-a_0\mu^0)_\theta[0,\gamma(\theta)]$.  Therefore, we obtain
 \begin{eqnarray*}
   \lefteqn{\lim_{\theta \nearrow \theta^*}\mu_\theta[0,\gamma(\theta)) - 
   \lim_{\theta \searrow \theta^*}\mu_\theta[0,\gamma(\theta)] \ = }\\
 & = & \lim_{\theta \nearrow \theta^*}\mu^0_\theta[0,\gamma(\theta)) - 
 \lim_{\theta \searrow \theta^*}\mu^0_\theta[0,\gamma(\theta)] \ = \ a_0 
\end{eqnarray*}
 where the last equality follows from (2) and the definition of $\mu^0$.
\end{proof}

\begin{remark} \label{r.denjoy-dynamics} Note that the preimage of
  $\Gamma$ under $h$ in the above construction is homeomorphic to
  $\TTT^1$. A qualitative picture is given in Figure~\ref{f.preimage}
  . In order to make this precise, we represent $\TTT^1$ by $[0,1)$ to
  define the infimum of a subset. With this convention, we let
\[
\eta(\theta) = b\, \theta + \sum_{\theta_n \in [0,\theta)} a_n \qquad \qquad
\hat\eta(t) = \inf \{ \theta \sep \eta(\theta) \geq t \} \ .
\]
Note that thus $\eta$ is a right inverse of $\hat\eta$. Further, we
let $$\gamma^+(\theta) = \sup\{x\in\III\mid (\theta,x)\in h^{-1}(\Gamma)\}$$ and
finally
\[
\xi \ : \ \TTT^1 \to h^{-1}(\Gamma) \quad , \quad t \mapsto \left(\, \hat\eta(t) \, , \,
\gamma^+(\hat\eta(t))-(t-\eta(\hat\eta(t))\, \right).
\]
The fact that this provides a homeomorphism $\xi$ between $\TTT^1$ and
$h^{-1}(\Gamma)$ can now be checked easily.

Since the vertical segments in $h^{-1}(\Gamma)$ are wandering, this
means that $\xi$ is a conjugacy between $\hat f_{|h^{-1}(\Gamma)}$ and
a Denjoy counterexample on the circle. Using well-known results on
these maps, we obtain the following direct consequence of this remark.
\end{remark}

\begin{corollary}
Under the assumptions of the previous theorem,
$h^{-1}(\Gamma)$ contains exactly one minimal set.
This set is almost automorphic, not almost periodic, and not filled-in.
\end{corollary}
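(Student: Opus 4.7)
The plan is to transfer the well-known structure of classical Denjoy examples through the explicit conjugacy provided by Remark~\ref{r.denjoy-dynamics}. First I invoke that remark to obtain a homeomorphism $\xi:\kreis\to h^{-1}(\Gamma)$ which conjugates $\hat f_{|h^{-1}(\Gamma)}$ to a Denjoy circle homeomorphism $g$. Since every Denjoy counterexample admits a unique minimal set, namely the Cantor set $C$ obtained as the complement of the open wandering intervals, the image $M:=\xi(C)$ is the unique minimal set of $\hat f_{|h^{-1}(\Gamma)}$, which settles the uniqueness claim. By part (vi) of Theorem~\ref{T:Main} there exists an almost automorphic, non-almost-periodic minimal set inside $h^{-1}(\Gamma)$, and uniqueness forces this set to coincide with $M$; so two of the three remaining claims follow at once.

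Alternatively, one can derive these two properties directly through $\xi$. The semiconjugacy $\pi_1\circ h_{|M}:M\to\kreis$ from $\hat f_{|M}$ to the irrational rotation $R$ has singleton fibres outside the countable orbit $\Orb(\theta^*)$ and pairs of gap-endpoints above that orbit, so it is almost 1-1 onto an almost periodic base; Veech's structure theorem then yields almost automorphy. Equicontinuity of $\hat f_{|M}$ is precluded by the existence of proximal pairs: the two endpoints of the wandering segment over $\theta^*_n$ are at distance equal to the length of that segment, and after $k$ iterates of $\hat f$ this distance is the length of the segment over $\theta^*_{n+k}$, which tends to $0$ along suitable subsequences since the lengths are summable.

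The one remaining task is to show that $M$ is not filled-in, and here I would use part (iv) of Theorem~\ref{T:Main} directly. For each $n\in\ZZZ$ the preimage $J_n:=h^{-1}\{(\theta^*_n,\gamma(\theta^*_n))\}$ is a nontrivial vertical segment whose two endpoints lie in $M$ (via $\xi$ they correspond to the endpoints of a gap of the Denjoy Cantor set $C$, which by definition belong to $C$), while the open interior of $J_n$ sits on a wandering orbit of $\hat f$ and therefore does not meet $M$. It follows that $\varphi_M^-(\theta^*_n)\leq\inf J_n<\sup J_n\leq\varphi_M^+(\theta^*_n)$, so any $(\theta^*_n,x)$ with $x\in\inter J_n$ lies in $[\varphi_M^-,\varphi_M^+]\smin M$. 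This gives the strict inclusion $M\subsetneq[\varphi_M^-,\varphi_M^+]$.

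I do not expect a substantive obstacle: once Remark~\ref{r.denjoy-dynamics} is in hand, all four assertions reduce either to classical Denjoy facts or to bookkeeping with the construction. The only mildly subtle point is the identification of gap endpoints of $C$ under $\xi$ with endpoints (rather than interior points) of the vertical segments, but this is immediate from the explicit formula for $\xi$ in the remark.
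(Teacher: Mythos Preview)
Your proposal is correct and follows essentially the same route as the paper: the corollary is stated as a direct consequence of Remark~\ref{r.denjoy-dynamics}, i.e., of the conjugacy $\xi$ to a Denjoy circle homeomorphism, together with the standard facts about such maps (unique Cantor minimal set, gap endpoints belonging to it, wandering open gaps). You simply spell out the details that the paper leaves implicit, including the identification of the vertical-segment endpoints with gap endpoints of the Denjoy Cantor set, which is exactly what is needed for the ``not filled-in'' conclusion.
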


\begin{remark}
  We note that a slight modification of the above proof of
  Theorem~\ref{T:Main} also allows to produce examples where
  $h^{-1}(\Gamma)$ is filled-in. In order to do so, the functions
  $\varphi$ and $\psi$ only have to be chosen such that points
  $\theta$ with $\varphi(\theta)=\gamma(\theta)>\psi(\theta)$ and
  points $\theta'$ with $\varphi(\theta)>\gamma(\theta)=\psi(\theta)$
  both accumulate on $\theta^*$ from both sides. This will make the
  set $h^{-1}(\Gamma)$ `oscillate' close to $\theta^*$ in such a way,
  that the whole vertical segment over $\theta^*$ is contained in the
  closure of $h^{-1}(\Gamma)\smin (\{\theta^*\}\times\III)$.
\end{remark}

\begin{center}
\begin{figure}
  \epsfig{file=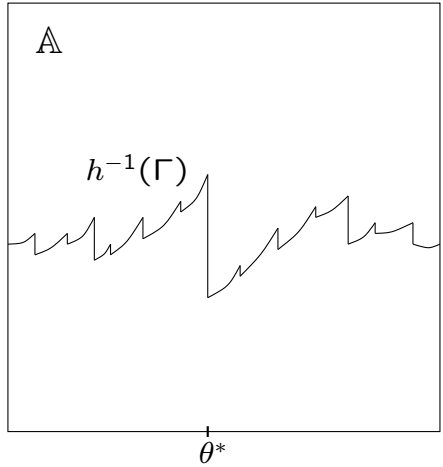, clip=, width=0.5\textwidth}
  \caption{\small A schematic picture of the set $h^{-1}(\Gamma)$. \label{f.preimage}}
\end{figure}
\end{center}

For the use in Section~\ref{Sharkovski}, we finally apply
Theorem~\ref{T:Main} to a quasiperiodically forced increasing
map $f$ which maps the annulus strictly inside itself and has a unique
continuous invariant curve $\Gamma$ as its global attractor, that is,
$\Gamma=\ncap f^n(\AAA)$. The resulting extension $\hat f$
then has no continuous invariant curves at all, and its global
attractor is the pinched set $h^{-1}(\Gamma)$. This leads to the
following
\begin{corollary}
  \label{c.nocurve}
  There exists a quasiperiodically forced increasing interval map
  $\hat f$ with the following properties:
  \begin{itemize}
  \item $\hat f$ has no continuous invariant curves;
  \item $\hat f(\AAA) \ssq \inte \AAA$;
  \item $\ncap \hat f^n(\AAA)$ is pinched and thus contains
    a unique almost automorphic minimal set $M$.
  \end{itemize}
\end{corollary}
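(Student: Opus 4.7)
The plan is to apply Theorem~\ref{T:Main} to an explicit base system $f$ that is already known to be a qpf contraction with a unique continuous invariant curve. A concrete choice is $f(\theta,x)=(R(\theta),\lambda x+\tfrac{1-\lambda}{2})$ on $\AAA=\TTT^1\times[0,1]$ with $\lambda\in(0,1)$: then $f(\AAA)\ssq\inter\AAA$ by direct computation, $\Gamma=\TTT^1\times\{\tfrac12\}$ is an $f$-invariant curve with $\gamma\equiv\tfrac12\in\inter\III$, and $\Gamma=\ncap f^n(\AAA)$ by contraction. Uniqueness of $\Gamma$ among continuous $f$-invariant curves follows by iterating the fibrewise functional equation: any such $\eta$ satisfies $\|\eta-\tfrac12\|_\infty\le\lambda^n\|\eta-\tfrac12\|_\infty$, forcing $\eta\equiv\tfrac12$. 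All hypotheses of Theorem~\ref{T:Main} are therefore in force, and I let $\hat f$ and $h$ denote the resulting extension and factor map.

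The non-existence of continuous $\hat f$-invariant curves is the easy half: if $\hat\Gamma$ were the graph of a continuous $\eta:\TTT^1\to\III$ with $\hat f(\hat\Gamma)=\hat\Gamma$, then $h(\hat\Gamma)$ would be a continuous $f$-invariant curve, hence equal to $\Gamma$ by uniqueness, and so $\hat\Gamma\ssq h^{-1}(\Gamma)$, contradicting property (v) of Theorem~\ref{T:Main}.

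For $\hat f(\AAA)\ssq\inter\AAA$ and for the identification of the maximal invariant set, the key input is that the fibre measures $\mu_\theta$ constructed in Theorem~\ref{T:Main} have full topological support in $\III$ with atoms only at the interior points $\gamma(\theta_n^*)$. Lemma~\ref{l.basic}(i) then gives $h_\theta^{-1}\{0\}=\{0\}$ and $h_\theta^{-1}\{1\}=\{1\}$, and the factor identity $h\circ\hat f=f\circ h$ yields $h_{R(\theta)}(\hat f_\theta(0))=f_\theta(0)\in(0,1)$, whence $\hat f_\theta(0)>0$, and analogously $\hat f_\theta(1)<1$. The strict monotonicity of the fibre maps (property (i)) upgrades this to $\hat f_\theta([0,1])\ssq(0,1)$ for every $\theta$, i.e.\ $\hat f(\AAA)\ssq\inter\AAA$. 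The same factor identity gives $h(\hat f^n(\AAA))=f^n(\AAA)$, and since $h$ is a continuous surjection that commutes with decreasing intersections of compacta, together with the obvious $\hat f$-invariance of $h^{-1}(\Gamma)=\hat f(h^{-1}(\Gamma))$ (a consequence of $f(\Gamma)=\Gamma$ and the factor relation), we obtain $\ncap\hat f^n(\AAA)=h^{-1}(\Gamma)$, which is pinched by property (vi). Uniqueness of the almost automorphic minimal set $M$ inside this attractor follows from Remark~\ref{r.denjoy-dynamics}: the restriction $\hat f_{|h^{-1}(\Gamma)}$ is conjugate via $\xi$ to a Denjoy circle homeomorphism, and the latter possesses exactly one (Cantor) minimal set.

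The main obstacle I anticipate is the inward-mapping property $\hat f(\AAA)\ssq\inter\AAA$, because $\hat f$ is defined directly only on $\Lambda=(\TTT^1\smin\Orb(\theta^*))\times\III$ and elsewhere only by continuous extension; the argument therefore has to exploit the factor identity together with the precise behaviour of $h_\theta$ at the endpoints of $\III$, both of which rely on the interiority of the atoms of $\mu_\theta$.
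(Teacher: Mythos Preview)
Your proposal is correct and follows the same route the paper indicates in the paragraph preceding the corollary: start from a qpf contraction with a unique invariant curve as global attractor, apply Theorem~\ref{T:Main}, and read off the three bullets from properties (i)--(vi), Lemma~\ref{l.basic}, and Remark~\ref{r.denjoy-dynamics}. One small imprecision: the equality $\hat f(h^{-1}(\Gamma))=h^{-1}(\Gamma)$ is not quite ``obvious'' from the factor relation alone (that only gives the forward inclusion, since $\hat f$ is not surjective on $\AAA$); the reverse inclusion uses that $\hat f_{|h^{-1}(\Gamma)}$ is a homeomorphism, which is exactly what Remark~\ref{r.denjoy-dynamics} provides---so the argument closes, just cite the Remark at that step rather than only for uniqueness.
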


\section{More general forcing}
\label{General}

The above construction can be adapted to more general forcing
processes.  Depending on what exactly we assume about the forcing, we
get different properties for the resulting system.

In fact, to obtain Theorem \ref{t.main} we only need to introduce
slight modifications in the definition of the functions $\varphi,
\psi: \Theta \to \III$ in the proof of Theorem~\ref{T:Main}. Since we
do not have one-dimensional structure and left and right neighborhoods
on $\Theta$ in general case, we have to change property (2) as
follows:
\begin{itemize}
\item[(2')] $\varphi_{|S} = \gamma_{|S}$ and $\psi_{|T} = \gamma_{|T}$ \ ,
\end{itemize}
where $S:=\{\sigma_n \in \Theta \ssep n \in \NNN\}$ and $T:=\{\tau_n
\in \Theta \ssep n \in \NNN\}$ are two pairwise disjoint sets with
$\sigma_n$ and $\tau_n$ converging to an arbitrarily chosen point
$\theta^* \in \Theta$. An explicit way how to define such functions
is 
\[
\varphi(\theta) := \gamma(\theta) + c \, \dist(\theta,S)
\qquad \mathrm{and} \qquad
\psi(\theta) := \gamma(\theta) - c \, \dist(\theta,T) \ ,
\]
where $c>0$ is a suitable scaling constant. 

In this situation, replacing the rotation $R$ by an almost periodic
minimal homeomorphism $\alpha$ of a compact metric space $\Theta$, we
can apply the proof of Theorem \ref{T:Main} without changes obtaining
all the properties (i)-(vi).  However, the minimal set obtained in the
end does not have to be non-filled-in, since the weaker condition (2')
does not suffice to guarantee this.

We can apply the construction even in more general situations
concerning the forcing $(\Theta, \alpha)$.  Let us assume that
$\Theta$ is an arbitrary compact metric space and $\alpha$ is a
homeomorphism of $\Theta$ with an aperiodic point $\theta^*$ which is
not isolated.  Still, all the properties from Theorem \ref{T:Main}
hold but the last one.  Even now, the set $h^{-1}(\Gamma)$ is pinched
and it is in fact an invariant closed set projecting onto the whole
driving space.  Minimality of $\alpha$ is only needed to ensure that
it contains a minimal set projecting onto $\Theta$, and almost
periodicity is only used to ensure that this minimal set is almost
automorphic.

Under the weakest assumptions mentioned, using the modified
construction above, we produce an example of an $\alpha$-forced
increasing interval map, a topological extension of the original one,
possesing an invariant closed set projecting on the whole driving
space such that it does not contain any graph of a continuous curve.

All the mentioned results can be also easily adapted for the case of
$\alpha$-forced circle homeomorphisms, ie. instead of increasing maps
on interval fibers we consider orientation preserving homeomorpisms on
circle fibers.

\section{A remark concerning a Sharkovsky-like theorem for qpf Maps}
\label{Sharkovski}

One of the most fundamental results in one-dimensional
dynamics is the Shar\-kov\-sky Theorem
\begin{theorem}[Sharkovsky, 1964]
  Suppose a continuous interval map $f$ has a periodic orbit of least
  period $n$. Then $f$ has periodic orbits of least period $m$ for all
  $m\in\NNN$ smaller than $n$ in the Sharkovsky ordering
\[
1 \triangleleft 2 \triangleleft 2^2 \triangleleft \ldots 2^2\cdot 7 
\triangleleft 2^2\cdot 5 \triangleleft 2^2 \cdot 3
\triangleleft\ldots 2\cdot 7 
\triangleleft 2\cdot 5 \triangleleft 2 \cdot 3
\triangleleft \ldots 7\triangleleft 5 \triangleleft 3 \ .
\]
\end{theorem}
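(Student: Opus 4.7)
The plan is to follow the standard combinatorial approach through covering graphs of intervals, as developed by \v{S}tefan and later streamlined by Block, Guckenheimer, Misiurewicz and Young. First I would order the periodic orbit $P=\{p_1<p_2<\cdots<p_n\}$ along the line and form the $n-1$ basic intervals $I_j=[p_j,p_{j+1}]$. The elementary workhorse is the covering lemma: if $f(I)\supseteq J$ for closed intervals $I,J$, then $I$ contains a closed subinterval mapped onto $J$ by $f$, so any $f$-covering loop $I_{j_0}\to I_{j_1}\to\cdots\to I_{j_{k-1}}\to I_{j_0}$ of length $k$ yields, via nested preimages and the intermediate value theorem, a fixed point of $f^k$ whose itinerary visits the $I_{j_i}$ in the prescribed order. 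With care in choosing the loop so that its itinerary is not realised by a strict sub-loop, this fixed point has least period exactly $k$.

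The heart of the argument is the \v{S}tefan lemma. Assume first that $n$ is odd with $n\geq 3$ and pick an orbit of least period $n$. Since not all arrows in the induced action of $f$ on $P$ can point to the right, there is an index $i$ with $f(p_i)\leq p_i$ and $f(p_{i+1})\geq p_{i+1}$; a case analysis on the relative positions of the forward iterates of $p_i$ and $p_{i+1}$ would show that, after a convenient relabelling, the covering graph contains a \emph{\v{S}tefan cycle}, namely a loop of length $n-1$ on a suitable chain of basic intervals $J_1\to J_2\to\cdots\to J_{n-1}\to J_1$ together with an additional shortcut arrow. Combining the main loop with the shortcut produces elementary $f$-covering loops of every length $k$ with $k\triangleleft n$ in the Sharkovsky ordering, and the covering lemma from the previous paragraph converts each such loop into a periodic point of least period exactly $k$.

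The remaining cases reduce to the odd one by induction on the number of factors of two. If $n=2^{s}\cdot q$ with $q\geq 3$ odd and $s\geq 1$, the orbit of $f$ decomposes into one or two $f^2$-orbits of period $n/2 = 2^{s-1}\cdot q$, so $f^2$ has an orbit of least period $2^{s-1}q$; by induction $f^2$ then enjoys periodic orbits of every $m'\triangleleft 2^{s-1}q$, and each such orbit lifts to an $f$-orbit of least period $m'$ or $2m'$. A short book-keeping argument shows that the periods obtained in this way are exactly the $m\triangleleft n$ of mixed type, while the pure powers-of-two tail $\cdots\triangleleft 8\triangleleft 4\triangleleft 2\triangleleft 1$ is covered by the same squaring trick combined with the fact that every continuous interval map has a fixed point. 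I expect the main obstacle to be the \v{S}tefan lemma itself: producing the precise cycle pattern requires a delicate combinatorial case analysis, and one has to simultaneously rule out that the extracted periodic points have period strictly smaller than the length of the chosen loop --- a subtlety which the naive covering-loop argument does not address and which is handled by exploiting the rigidity of the \v{S}tefan cycle to forbid proper sub-periodic realisations.
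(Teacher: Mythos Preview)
The paper does not prove this theorem. Sharkovsky's Theorem is stated in Section~\ref{Sharkovski} purely as background: it is the classical 1964 result that motivates the question of whether an analogue holds for quasiperiodically forced interval maps, and the paper's actual contribution in that section is the construction of a qpf map with a three-periodic curve but no invariant curve. There is therefore no proof in the paper to compare your proposal against.

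That said, your outline is the standard modern proof (\v{S}tefan's combinatorial analysis refined by Block--Guckenheimer--Misiurewicz--Young via covering relations and the directed graph on the basic intervals), and the obstacles you flag --- extracting the \v{S}tefan cycle from an odd-period orbit and verifying that the periodic point produced by a covering loop has \emph{least} period equal to the loop length --- are exactly the delicate points. One small correction: your opening claim that ``every continuous interval map has a fixed point'' is false as stated (take $f(x)=x+1$ on $\mathbb{R}$); what you need, and what the covering argument actually gives, is that any interval $I$ with $f(I)\supseteq I$ contains a fixed point, and the existence of a period-$n$ orbit forces some basic interval $I_j$ to $f$-cover itself.
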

In the context of skew product dynamics, it is a natural question to
ask whether this result can be generalised in a suitable way to forced
interval maps of the form~(\ref{e.skewproducts}). If the base dynamics
are aperiodic, an immediate problem that arises is to determine a
suitable class of objects that would play the role of periodic orbits
in this setting. A positive result in this direction given in
\cite{fabbri/jaeger/johnson/keller:2005} is based on the concept of
{\em core strips}. Unfortunately the definition of these objects is
rather technical, and we refrain from stating it here. More natural
analogues of periodic orbits would be closed curves of the form
$\Gamma=\{(\theta,\gamma(\theta)) \mid \theta \in \TTT^1\}$ with
continuous $\gamma:\TTT^1\to \III$.  However, the following simple
construction shows that, even in the quasiperiodic case, it is not
possible to obtain a Sharkovsky Theorem for forced interval maps by
replacing periodic points with periodic curves. It can therefore be
seen as a motivation for the use of the more complicated core strips
in \cite{fabbri/jaeger/johnson/keller:2005}.
\begin{proposition}
  There exists a quasiperiodically forced interval map which has a
  three-periodic continuous curve, but no invariant continuous curve.
 \end{proposition}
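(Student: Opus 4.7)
My plan is to combine a direct-product example that exhibits the three-periodic continuous curve with the measure-theoretic blow-up construction of Section~\ref{BlowUp}, used to destroy the single continuous invariant curve of the product.

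First, fix an irrational $\omega$ and a strictly increasing continuous $g_0:[0,1]\to[0,1]$ with $g_0(0)=0$, $g_0(1)=1$ and $g_0(1/2)=5/6$ (e.g.\ piecewise linear). Define $\sigma:[0,3]\to[0,3]$ by $\sigma(x)=x+1$ on $[0,2]$ and $\sigma(x)=3-3g_0(x-2)$ on $[2,3]$; this is continuous on $[0,3]$ since both formulas give $3$ at $x=2$, and a direct calculation shows that $\sigma$ cycles $1/2\mapsto 3/2\mapsto 5/2\mapsto 1/2$ and has a unique fixed point $p=13/6$. Setting $F_0=R_\omega\times\sigma$ on $\TTT^1\times[0,3]$ yields a qpf interval map with constant three-periodic continuous curves $\gamma_i\equiv 1/2+(i-1)$ and a constant invariant continuous curve $\Gamma\equiv p$. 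Any other continuous $F_0$-invariant curve $\gamma$ must have $\gamma(\TTT^1)$ equal to a connected $\sigma$-invariant compact subset of $[0,3]$; a short case analysis using the piecewise-affine structure of $\sigma$ shows the only such sets are $\{p\}$ and $[0,3]$, and the latter is excluded because $\sigma|_{[0,3]}$ has a fixed point, incompatible with being a factor of the minimal rotation $R_\omega$.

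Next, I apply the blow-up of Section~\ref{BlowUp} to $F_0$ at the curve $\Gamma$: pick $\theta^*\in\TTT^1$ and summable weights $a_n>0$ with $\sum a_n<1$; set $\mu^0_{\theta^*}=\delta_p$ and take $\mu^0_\theta$ to be normalized Lebesgue on a suitable window around $p$ for $\theta\neq\theta^*$; propagate by pushforward under iterates of $F_0$ to get $\mu_\theta$; define fibrewise quantile functions $H_\theta$, and set $\hat F=H^{-1}\circ F_0\circ H$ on the complement of $H^{-1}(\Gamma)$, extended by uniform continuity to all of $\TTT^1\times[0,3]$. This produces a qpf interval map $\hat F$ with a factor map $H$ to $F_0$ such that $H$ is injective off $H^{-1}(\Gamma)$ and $H^{-1}(\Gamma)$ contains no graph of a continuous curve (cf.\ properties (iii) and (v) of Theorem~\ref{T:Main}).

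It then remains to verify the proposition for $\hat F$. The three-cycle $\{\gamma_i\}$ is disjoint from $\Gamma$ (since $p=13/6$ is distinct from $1/2,3/2,5/2$), so each $\gamma_i$ lifts via $H$ to a continuous curve $\hat\gamma_i$, and the relations $\hat F\hat\gamma_i=\hat\gamma_{i+1}\circ R_\omega$ follow directly from the semiconjugacy $H\hat F=F_0 H$. Conversely, if $\hat\gamma$ were a continuous $\hat F$-invariant curve, then $H\circ\hat\gamma$ would be a continuous $F_0$-invariant curve and hence equal to $\Gamma$; thus $\hat\gamma(\TTT^1)\subseteq H^{-1}(\Gamma)$, contradicting that this set contains no continuous graph. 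The main technical obstacle is that Theorem~\ref{T:Main} is formulated for strictly increasing fibre maps, whereas $F_0$ is not monotone at $x=2$. However, the continuity and non-existence arguments in the proof of Theorem~\ref{T:Main} only use local monotonicity of the map near the blown-up curve, and $\sigma$ is strictly monotone (decreasing) on $[2,3]\ni p$; I therefore expect the proof to carry over to $F_0$ with at most a sign change on the decreasing branch.
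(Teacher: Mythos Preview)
Your overall strategy---start with a product system having a three-periodic orbit and a unique fixed point, then blow up the invariant curve through that fixed point---matches the paper's. The gap is in how you carry out the blow-up.

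You place the unique fixed point $p=13/6$ on the \emph{decreasing} branch of $\sigma$ and then invoke Theorem~\ref{T:Main} directly on the non-monotone map $F_0$. But the uniform-continuity proof of Theorem~\ref{T:Main} rests on the identity
\[
  \mu^0_{R^{-n+1}(\theta)}\bigl( f^{-n}_{R(\theta)}[0,f_\theta(y)]\bigr)
  \;=\;\mu^0_{R^{-n+1}(\theta)}\bigl( f^{-(n-1)}_{\theta}[0,y]\bigr),
\]
which uses that $f_\theta^{-1}[0,f_\theta(y)]=[0,y]$ as sets; this fails for non-increasing $f_\theta$. Your fallback that ``only local monotonicity near $\Gamma$'' is needed does not save the argument: the measures $\mu^n_\theta$ are pushforwards of the windows $[\psi,\varphi]$ under $f^n$, and since your fixed point is repelling (with $|\sigma'(p)|=5$), those supports expand and leave any interval on which $\sigma$ is monotone. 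A decreasing branch is not handled by ``a sign change''; the whole shift from $r_n$ to $r_{n-1}$ in \eqref{e.unfold-preimage}--\eqref{e.unfold-image} would have to be redone from scratch, and it is not clear it works.

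The paper avoids this cleanly by surgery rather than by a global blow-up. It chooses $g$ so that the unique fixed point $x_0$ lies on a strictly \emph{increasing} branch and is attracting, then isolates a forward-invariant sub-annulus $\AAA_0=\TTT^1\times[a_-,a_+]$ around $\Gamma_0$, small enough to miss the three-periodic orbit. Inside $\AAA_0$ the dynamics are replaced, via a fibre-preserving conjugacy, by the map $\hat f$ supplied by Corollary~\ref{c.nocurve}; outside $\AAA_0$ nothing is changed. Since $\hat f$ is already an increasing qpf map with no continuous invariant curve, and since any invariant curve of the glued map would have to lie either entirely in $\AAA_0$ (forward-invariant) or coincide with an invariant curve of $F$ outside, the result follows. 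The point is that Theorem~\ref{T:Main} is used as a black box on an honest increasing map, not extended to a non-monotone one.

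Your construction is easily repaired along the same lines: arrange $\sigma$ so that its unique fixed point is attracting and sits on an increasing branch, then perform the blow-up only on a forward-invariant sub-annulus around it.
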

 \begin{proof}
  We start our construction with the direct product of an irrational
  rotation $R$ and an interval map $g$ such that 
  \begin{itemize}
  \item $g$ has a three-periodic orbit;
  \item $g$ has a unique fixed point $x_0$, which is repelling;
  \item $g$ is strictly increasing in a neighbourhood of $x_0$.
  \end{itemize}
  See Figure~\ref{f.g} for an example.
  
\begin{figure}\begin{center}
  \epsfig{file=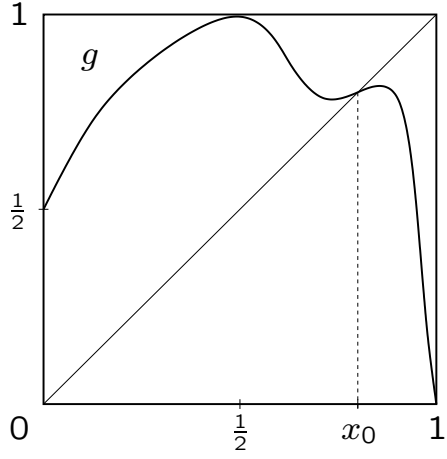, clip=, width=0.5\textwidth}
  \caption{\small The map $g$ with 3-periodic orbit $\{0,\halb,1\}$
    and unique and attracting fixed point $x_0$. \label{f.g}}\end{center}
\end{figure}

  Now, first note that if two periodic curves of $F=R\times g$
  intersect, then by minimality of $R$ they have to coincide. Hence,
  two periodic curves are either equal or disjoint. This implies in
  particular that all periodic curves of $F$ are constant, since for
  any periodic curve $\Gamma=\{(\theta,\gamma(\theta))\mid
  \theta\in\kreis\}$ and any $\rho\in\kreis$ the curve $\Gamma_\rho$
  parametrised by $\gamma_\rho: \theta\mapsto \gamma(\theta+\rho)$ is
  periodic as well and therefore must equal $\Gamma$. Hence, the
  periodic curves of $F$ correspond exactly to the periodic points of
  $g$, and in particular $F$ has a unique invariant curve
  $\Gamma_0=\kreis\times\{x_0\}$.

  Since $x_0$ is attracting, there exist $a_-,a_+\in\III$ such that
  $a_-<g(a_-)<x_0<g_(a_+)<a_+$ and $[a_-,a_+]$ does not contain any
  three-periodic point. Let $\AAA_0 := \kreis\times [a_-,a_+]$
  and $\widehat{\AAA}_0 := F(\AAA_0)=\kreis\times
  [g(a_-),g(a_+)]$. 

  Suppose now that $\hat f$ is a
  quasiperiodically forced increasing interval map with the properties
  given by Corollary~\ref{c.nocurve}. Let $\widehat{\AAA}:=\hat
  f(\AAA)\ssq \AAA$.  Choose a homeomorphism
  $h_1:\AAA\to\AAA_0$ of the form
  $h(\theta,x)=(\theta,h_\theta(x))$ which has increasing fibre maps
  $h_\theta$ and satisfies
  $h_1(\widehat{\AAA})=\widehat{\AAA}_0$.  Then define
  $\widehat F$ by
  \[
     \widehat F(\theta,x) \ = \ \left\{
       \begin{array}{cl}
         F(\theta,x)  & \textrm{if } (\theta,x)\notin \AAA_0\\
         \\
         h_1\circ \hat f \circ h_1^{-1} & \textrm{if } (\theta,x)\in\AAA_0
       \end{array}
\right.  \ .
\]
In other words, we replace the dynamics of $F$ on $\AAA_0$ by
those of $\hat f$, thus `destroying' the continuous invariant curve in
$\AAA_0$. Hence, the resulting map $\widehat F$ has no
continuous invariant curves anymore. However, it still has the
three-periodic invariant curves corresponding to the three-periodic
orbit of $g$, since these were not affected by the construction.
\end{proof}

\section{Reproduction of some examples by Rees} \label{Rees}

Given a homeomorphism $f:X\to X$ on a metric space $(X,d)$, a point
$x\in X$ is called {\em distal} if $\inf_{n\in\ZZZ} d(f^n(x),f^n(y)) > 0$
for any $y\in X$, $y\neq x$. A homeomorphism $f$ is called {\em point-distal} if there exists
a distal point, and it is called {\em distal} if all points in $X$ are
distal (as in the case of isometry).

In \cite{rees:1979}, Rees constructed point-distal, but non-distal
minimal homeomorphisms of the torus. Her examples are, in fact, skew
product transformations of the form $f:\TTT^2\to\TTT^2,\ (\theta,x)
\mapsto (\theta+\omega,f_\theta(x))$ with irrational
$\omega\in\TTT^1$. Furthermore, they are semi-conjugate to an
irrational rotation $R:(\theta,x) \mapsto (\theta+\omega,x+\rho)$ on
the torus, and the semi-conjugacy $h$ from $f$ to $R$ is bijective
except on a countable union of vertical segments which are mapped to a
single orbit of the rotation. The union of these segments equals the set of
non-distal points, all other points are distal. One may say that these
examples are obtained by {\em blowing up} the points of an orbit of
the rotation $R$ to vertical segments.

Although the techniques employed in Section~\ref{BlowUp} are quite
different from those used by Rees, they can easily be adapted in order
to reproduce such examples.

\begin{proposition}[Rees]
  There exists a point-distal, non-distal minimal transformation of the two-torus.
\end{proposition}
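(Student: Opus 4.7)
The plan is to apply the blow-up construction of Theorem~\ref{T:Main}, adapted to circle fibres as outlined at the end of Section~\ref{General}, to the doubly-irrational rotation $R:\TTT^2\to\TTT^2$ given by $R(\theta,x)=(\theta+\omega,x+\rho)$ with $1,\omega,\rho$ rationally independent, so that $R$ is a minimal isometry. Fix a point $(\theta^*,x^*)\in\TTT^2$ with dense $R$-orbit and a sequence $(a_n)_{n\in\ZZZ}$ of strictly positive weights with $\sum_{n\in\ZZZ}a_n<1$, and apply the construction to blow up the $R$-orbit of $(\theta^*,x^*)$ to a sequence of pairwise disjoint vertical arcs $I_n$ in the fibre over $\theta^*_n:=\theta^*+n\omega$, of arclength $a_n$. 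This yields a continuous extension $\hat f$ of $R$ and a factor map $h:\hat f\to R$ which is a bijection off $\bigcup_{n\in\ZZZ} I_n$ and which collapses each $I_n$ to the single point $R^n(\theta^*,x^*)$. By construction, $\hat f(I_n)=I_{n+1}$.

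The first thing I would verify is minimality of $\hat f$. Given any $p\in\TTT^2$ and any non-empty open set $U\subseteq\TTT^2$, the preimage $h^{-1}(U)$ is open and non-empty by continuity and surjectivity of $h$; since $R$ is minimal and $h\circ\hat f^n=R^n\circ h$, the orbit $\{R^n(h(p))\}_{n\in\ZZZ}$ hits $U$ infinitely often, so $\hat f^n(p)\in h^{-1}(U)$ for infinitely many $n$. Hence the $\hat f$-orbit of every point is dense in $\TTT^2$.

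Next I would exhibit a distal point. The union $\bigcup_n I_n$ is a countable union of one-dimensional arcs in $\TTT^2$, hence meagre and of measure zero, so its complement is non-empty (in fact residual). Choose any $p$ in this complement, and let $q\in\TTT^2\smin\{p\}$. If $h(q)=h(p)$ then $q\in h^{-1}\{h(p)\}=\{p\}$, a contradiction, so $h(q)\neq h(p)$. Because $R$ is an isometry, $d(h(\hat f^n(p)),h(\hat f^n(q)))=d(h(p),h(q))=:c>0$ for every $n\in\ZZZ$; by uniform continuity of $h$ on the compact space $\TTT^2$, there exists $\delta=\delta(c)>0$ such that $d(a,b)<\delta$ implies $d(h(a),h(b))<c$. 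Consequently $d(\hat f^n(p),\hat f^n(q))\geq\delta$ uniformly in $n$, and $p$ is distal.

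Finally, for non-distality, pick any two distinct points $p_1,p_2$ in the interior of a single blown-up arc, say $I_0$. Then $\hat f^n(p_1),\hat f^n(p_2)\in I_n$, so $d(\hat f^n(p_1),\hat f^n(p_2))\leq\diam(I_n)\leq a_n$, and since $\sum_n a_n<\infty$ we have $a_n\to 0$ as $|n|\to\infty$; hence $\inf_{n\in\ZZZ}d(\hat f^n(p_1),\hat f^n(p_2))=0$, showing that $\hat f$ is not distal. The main technical point behind the plan is not the distality bookkeeping but the verification that the proof of Theorem~\ref{T:Main} truly carries over to the situation where the fibres are circles and where, in the absence of any $R$-invariant continuous curve on $\TTT^2$, one blows up a single orbit rather than a curve; this is essentially the content of the adaptation described at the end of Section~\ref{General}, and would have to be checked with some care, but no new ideas beyond those already used in Section~\ref{BlowUp} are needed.
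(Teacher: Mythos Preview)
Your approach coincides with the paper's: adapt the blow-up construction to circle fibres and read off the distality properties. The paper makes the circle-fibre adaptation concrete by cutting $\TTT^2$ along $\TTT^1\times\{0\}$ (chosen to avoid the orbit of $z^*$) to obtain an annulus, carrying out the construction of Theorem~\ref{T:Main} verbatim on $\AAA$, and gluing back; your distality and non-distality arguments are correct and in fact more explicit than the paper's one-line sketch.

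There is, however, a genuine gap in your minimality argument. You fix an open set $U$ in the \emph{target} of $h$ and deduce that the $\hat f$-orbit of $p$ meets $h^{-1}(U)$; but density of the orbit requires that it meet every open set $V$ in the \emph{source}, and sets of the form $h^{-1}(U)$ do not form a basis there --- for instance, no $h^{-1}(U)$ is contained in the interior of a blown-up arc $I_n$, since $h$ collapses $I_n$ to a point. As written, your argument proves nothing beyond the minimality of $R$. The fix is standard and uses exactly the almost~1-1 property you record in the next paragraph: if $M\subseteq\TTT^2$ is any non-empty closed $\hat f$-invariant set, then $h(M)$ is closed and $R$-invariant, hence $h(M)=\TTT^2$; every point $z\notin\Orb_R(\theta^*,x^*)$ has a unique $h$-preimage, which must therefore lie in $M$; since these preimages constitute the dense set $\TTT^2\smin\bigcup_n I_n$, we get $M=\TTT^2$. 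Thus $\hat f$ is minimal.
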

\begin{proof}[Sketch of the proof]
  We construct $\mu$ in a similar fashion as in Theorem \ref{T:Main}.
  Start with an irrational rotation $f$ of $\TTT^2$ and fix a point
  $z^*=(\theta^*,x^*)\in\TTT^2$. Without loss of generality we may
  assume that the orbit of $z^*$ does not intersect the line
  $\TTT^1\times\{0\}$.  Now cut the torus $\TTT^2$ along
  $\TTT^1\times\{0\}$ to get the annulus $\AAA$.  We define $h$ on
  $\AAA$ exactly as we did in the proof of Theorem \ref{T:Main} and
  then we go back to $\TTT^2$ by gluing $\AAA$ along
  $\TTT^1\times\{0\}$. Absence of atoms at $0$ for the fibre measures
  ensures continuity of the resulting torus map $h$.
  Denote by $f_1$ the rotation given by $f$ in the first coordinate.
  We define $\hat f: \Lambda \to \Lambda$ on
  $\Lambda:=\TTT^2\setminus(\Orb_{f_1}(\theta^*)\times\TTT^1)$
 by $\hat f := (h_{|\Lambda})^{-1} \circ f \circ
(h_{|\Lambda})$.
  In exactly analogous way as before we show uniform continuity of $\hat
  f$ on $\Lambda$ (and $\hat f^{-1}$ as well), thus $\hat f$ extends to a
  homeomorphism of the whole $\TTT^2$.

  It is easy to see now, as pointed out in the paragraphs above, that
  non-distal points of $\hat f$ are exactly those given by the
  $h$-preimages of the atoms of $\mu$ while the rest is formed by
  distal points of $\hat f$.
\end{proof}

\end{document}